\providecommand{\tabularnewline}{\\}
 \theoremstyle{definition}
 \newtheorem*{defn*}{\protect\definitionname}
\theoremstyle{plain}
\newtheorem{thm}{\protect\theoremname}
  \theoremstyle{plain}
  \newtheorem{lem}[thm]{\protect\lemmaname}
  \theoremstyle{definition}
  \newtheorem{defn}[thm]{\protect\definitionname}
  \theoremstyle{plain}
  \newtheorem{prop}[thm]{\protect\propositionname}
  \theoremstyle{plain}
  \newtheorem{cor}[thm]{\protect\corollaryname}
  \theoremstyle{remark}
  \newtheorem*{rem*}{\protect\remarkname}
  \providecommand{\corollaryname}{Corollary}
  \providecommand{\definitionname}{Definition}
  \providecommand{\lemmaname}{Lemma}
  \providecommand{\propositionname}{Proposition}
  \providecommand{\remarkname}{Remark}
\providecommand{\theoremname}{Theorem}
\begin{document}

\begin{frontmatter}{}

\title{Linear complexity of Ding-Helleseth generalized cyclotomic sequences
of order eight}

\author[liang]{Yana Liang}

\ead{ynliang72@163.com}

\author[cao]{Jiali Cao}

\ead{jiali\_cao@hotmail.com}

\author[chen]{Xingfa Chen}

\ead{chenxingfa@gdei.edu.cn}

\author[sysu]{Shiping Cai}

\ead{csp1519@163.com}

\author[sysu]{Xiang Fan\corref{cor1}}

\ead{fanx8@mail.sysu.edu.cn}

\cortext[cor1]{Corresponding author.}

\address[liang]{School of Mathematics and Statistics, Zhaoqing University, Zhaoqing
526061, China}

\address[cao]{School of Applied Mathematics, Guangdong University of Technology,
Guangzhou 510520, China}

\address[chen]{Department of Mathematics, Guangdong University of Education, Guangzhou
510303, China}

\address[sysu]{School of Mathematics, Sun Yat-sen University, Guangzhou 510275,
China}
\begin{abstract}
During the last two decades, many kinds of periodic sequences with
good pseudo-random properties have been constructed from classical
and generalized cyclotomic classes, and used as keystreams for stream
ciphers and secure communications. Among them are a family DH-GCS$_{d}$
of generalized cyclotomic sequences on the basis of Ding and Helleseth's
generalized cyclotomy, of length $pq$ and order $d=\mathrm{gcd}(p-1,q-1)$
for distinct odd primes $p$ and $q$. The linear complexity (or linear
span), as a valuable measure of unpredictability, is precisely determined
for DH-GCS$_{8}$ in this paper. Our approach is based on Edemskiy
and Antonova's computation method with the help of explicit expressions
of Gaussian classical cyclotomic numbers of order $8$. Our result
for $d=8$ is compatible with Yan's low bound $(pq-1)/2$ of the linear
complexity for any order $d$, which means high enough to resist security
attacks of the Berlekamp--Massey algorithm. Finally, we include SageMath
codes to illustrate the validity of our result by examples.\end{abstract}
\begin{keyword}
Generalized cyclotomic sequence\sep Linear complexity\sep Cyclotomic
number

\MSC[2010]11B50\sep 94A55\sep 94A60
\end{keyword}

\end{frontmatter}{}


\section{\label{sec:Intr}Introduction}

Pseudo-random sequences are widely used in many fields such as simulation,
software testing, channel coding, ranging systems, radar systems,
global positioning systems, spread-spectrum communication systems,
code-division multiple-access systems, and especially in stream ciphers
and secure communications. When employed for cryptography, they should
possess certain unpredictable properties, especially with a high linear
complexity. The definition of \emph{linear complexity} (or \emph{linear
span}) is the length $L$ of the shortest linear feedback shift register
(LFSR) that generates the sequence. Since the Berlekamp--Massey algorithm
\cite{Berlekamp1968algebraic,Massey1969shift} can deduce the whole
sequence from a knowledge of just $2L$ consecutive digits, a ``high''
linear complexity $L$ should be at least one half of the length (or
minimum period) of the sequence.

Certain cyclotomic sequences, such as Legendre sequences and Hall's
sextic residue sequences, possess good pseudo-random properties (especially
with high linear complexities \cite{DingHellesethShan1998linear,KimSong2001linear}),
and have been widely used as keystreams in private-key cryptosystems.
Nowadays, a well-known technique to design a sequence with a high
linear complexity is based on classical or generalized cyclotomy \cite{CusickDingRenvall2004stream}.
There are many kinds of generalized cyclotomic sequences, binary and
non-binary, of different lengths. One family of them are based on
Whiteman's generalized cyclotomy \cite{Whiteman1962family}, of length
$pq$ for distinct odd primes $p$ and $q$, with a high linear complexity
precisely determined when $\mathrm{gcd}(p-1,q-1)=2^{k}$ \cite{Ding1997linear,BaiFuXiao2005linear,YanDuXiaoHuang2009linear}.
However, the imbalance (with $q-p-1$ more $1$'s than $0$'s in each
period) makes them improper for applications.

Another family of generalized cyclotomic sequences of length $pq$
and order $d=\mathrm{gcd}(p-1,q-1)$, denoted by DH-GCS$_{d}$ in
this paper, are based on Ding and Helleseth's generalized cyclotomy
\cite{DingHelleseth1998new}. (Also see \cite{FanGe2014} for
a unified approach to Whiteman\textquoteright s and Ding-Helleseth\textquoteright s
generalized cyclotomy.) The linear complexity of DH-GCS$_{d}$ when
$d=2$, $4$, $6$ is explicitly determined in \cite{BaiLiuXiao2005linear,YanHongXiao2008linear,EdemskiyAntonova2014linear},
and ensured by Yan \cite{Yan2011linear} to be at least $(pq-1)/2$
for any order $d$. We contribute to this line by calculating the
linear complexity of DH-GCS$_{8}$, using the computation method of
Edemskiy and Antonova \cite{EdemskiyAntonova2011computation,EdemskiyAntonova2014linear}
with the help of Gaussian classical cyclotomic numbers.

First let us introduce the definition of the Ding-Helleseth generalized
cyclotomic sequences DH-GCS$_{d}$.
\begin{itemize}
\item Let $p$ and $q$ be distinct odd primes with $d=\mathrm{gcd}(p-1,q-1)$.
\item Let $e=\mathrm{lcm}(p-1,q-1)=(p-1)(q-1)/d$. 
\item By the Chinese remainder theorem, take a common primitive root $g$
of both $p$ and $q$, and take an integer $f$ such that $f\equiv g$
$(\mathrm{mod}\ p)$ and $f\equiv1$ $(\mathrm{mod}\ q)$.
\item Let $\mathbb{Z}_{pq}^{*}$ denote the multiplicative group consisting
of all invertible elements in the residue ring
\[
\mathbb{Z}_{pq}=\mathbb{Z}/(pq\mathbb{Z})=\{[m\ \mathrm{mod}\ pq]:m\in\mathbb{Z}\},
\]
where $[m\ \mathrm{mod}\ pq]$ denotes the residue class of $m$ modulo
$pq$. Then 
\[
\mathbb{Z}_{pq}^{*}=\{[m\ \mathrm{mod}\ pq]:m\in\mathbb{Z},\ \mathrm{gcd}(m,pq)=1\}.
\]

\end{itemize}
Whiteman \cite{Whiteman1962family} proved that every element of $\mathbb{Z}_{pq}^{*}$
can be written uniquely of the form $[g^{u}f^{v}$ $\mathrm{mod}\ pq]$
with $u\in\{0,1,\dots,e-1\}$ and $v\in\{0,1,\dots,d-1\}$. Here the
uniqueness means that 
\[
g^{u}f^{v}\equiv g^{u'}f^{v'}\ (\mathrm{mod}\ pq)\Rightarrow u\equiv u'(\mathrm{mod}\ e)\text{ and }v\equiv v'(\mathrm{mod}\ d).
\]

\begin{defn*}
\cite{DingHelleseth1998new}\emph{ Ding-Helleseth generalized cyclotomic
classes} of order $d$ with respect to $(p,q,g)$ are
\[
D_{i}=\{[g^{i+dt}f^{v}\ \mathrm{mod}\ pq]:t=0,1,\dots,\frac{e}{d}-1;\ v=0,1,\dots,d-1\}\subseteq\mathbb{Z}_{pq}^{*},
\]
where $i\in\mathbb{Z}$, and let
\begin{align*}
D_{i}^{(p)} & =\{[g^{i+dt}\ \mathrm{mod}\ p]:t=0,1,\dots,\frac{p-1}{d}-1\}\subseteq\mathbb{Z}_{p}^{*},\\
Q_{i}=qD_{i}^{(p)} & =\{[qg^{i+dt}\ \mathrm{mod}\ pq]:t=0,1,\dots,\frac{p-1}{d}-1\}\subseteq\mathbb{Z}_{pq}^{*},\\
D_{i}^{(q)} & =\{[g^{i+dt}\ \mathrm{mod}\ q]:t=0,1,\dots,\frac{q-1}{d}-1\}\subseteq\mathbb{Z}_{q}^{*},\\
P_{i}=pD_{i}^{(q)} & =\{[pg^{i+dt}\ \mathrm{mod}\ pq]:t=0,1,\dots,\frac{q-1}{d}-1\}\subseteq\mathbb{Z}_{pq}^{*},
\end{align*}
\end{defn*}
\begin{itemize}
\item If $i\equiv j$ $(\mathrm{mod}\ d)$, then $D_{i}=D_{j}$, $P_{i}=P_{j}$
and $Q_{i}=Q_{j}$. These definitions concern only $[i\ \mathrm{mod}\ d]$.
\item If $i\not\equiv j$ $(\mathrm{mod}\ d)$, then $D_{i}\cap D_{j}=P_{i}\cap P_{j}=Q_{i}\cap Q_{j}=\O$
(the empty set).
\item $|D_{i}|=e$, $|P_{i}|=|D_{i}^{(q)}|=(q-1)/d$, and $|Q_{i}|=|D_{i}^{(p)}|=(p-1)/d$.
\item $\mathbb{Z}_{pq}^{*}=\bigcup_{i=0}^{d-1}D_{i}$, and $\mathbb{Z}_{pq}=\{[0\ \mathrm{mod}\ pq]\}\cup\bigcup_{i=0}^{d-1}(D_{i}\cup P_{i}\cup Q_{i})$.
\end{itemize}
The \emph{Ding-Helleseth generalized cyclotomic sequence} of order
$d$ (DH-GCS$_{d}$) with respect to $(p,q,g)$ is defined (cf. \cite{BaiLiuXiao2005linear,YanHongXiao2008linear,EdemskiyAntonova2014linear})
as the sequence $s^{\infty}=(s_{i})_{i\geqslant0}=(s_{0},s_{1},s_{2},\dots,s_{i},\dots)$
with
\[
s_{i}=\begin{cases}
1, & \text{if }[i\ \mathrm{mod}\ pq]\in\bigcup_{j=d/2}^{d-1}(D_{j}\cup P_{j}\cup Q_{j}),\\
0, & \text{otherwise}.
\end{cases}
\]
This sequence possesses the minimum period $pq$, and the almost balance
of 
\[
\frac{d}{2}(e+\frac{q-1}{d}+\frac{p-1}{d})=\frac{1}{2}(pq-1)
\]
symbols $1$'s and $\frac{1}{2}(pq+1)$ symbols $0$'s in a period.

Our main result, an explicit expression of the linear complexity of
DH-GCS$_{8}$ in terms of $p$ and $q$, can be stated in the following
Theorem \ref{thm:main}, and will be proved in Section \ref{sec:LCDH8}.
\begin{thm}
\label{thm:main} Given primes $p,q$ with $\mathrm{gcd}(p-1,q-1)=8$,
regardless of the choice of a common primitive root $g$ of theirs,
the linear complexity of the Ding-Helleseth generalized cyclotomic
sequence with respect $(p,q,g)$ is
\begin{align*}
L(p,q) & =pq-1-(p-1)\cdot\begin{cases}
\frac{1}{2}, & \text{if }\left(\frac{2}{p}\right)_{8}=1,\\
0, & \text{otherwise},
\end{cases}-(q-1)\cdot\begin{cases}
\frac{1}{2}, & \text{if }\left(\frac{2}{q}\right)_{8}=1,\\
0, & \text{otherwise},
\end{cases}\\
 & \quad-(p-1)(q-1)\cdot\begin{cases}
\frac{1}{2}, & \text{if }\left(\frac{2}{p}\right)_{8}=1=\left(\frac{2}{q}\right)_{4}=1,\\
\frac{1}{2}, & \text{if }\left(\frac{2}{p}\right)_{8}=1\neq\left(\frac{2}{q}\right)_{4}\text{ and }\left(\frac{p}{q}\right)_{4}=1,\\
\frac{1}{2}, & \text{if }\left(\frac{2}{p}\right)_{4}=1\neq\left(\frac{2}{p}\right)_{8},\left(\frac{2}{q}\right)_{4}\neq1\text{ and }\left(\frac{p}{q}\right)_{2}=1\neq\left(\frac{p}{q}\right)_{4},\\
\frac{1}{4}, & \text{if }\left(\frac{2}{p}\right)_{4}=1\neq\left(\frac{2}{q}\right)_{4}\text{ and }\left(\frac{p}{q}\right)_{2}\neq1,\\
0, & \text{otherwise}.
\end{cases}
\end{align*}
\end{thm}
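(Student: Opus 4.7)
The plan is to follow the generating-function approach of Edemskiy and Antonova: attach to one period of the sequence the polynomial $S(x) = \sum_{i=0}^{pq-1} s_i x^i \in \mathbb{F}_2[x]$, fix a primitive $pq$-th root of unity $\theta$ in $\overline{\mathbb{F}_2}$ (which exists since $\gcd(2,pq)=1$ ensures $x^{pq}-1$ has distinct roots), and invoke the standard identity
\[
L(p,q) \;=\; pq \;-\; \bigl|\{r \in \mathbb{Z}_{pq} : S(\theta^r)=0\}\bigr|.
\]
The indicator structure of $S$ gives $S(x) = \sum_{j=4}^{7}\bigl(D_j(x)+P_j(x)+Q_j(x)\bigr)$ with $D_j(x)=\sum_{k\in D_j}x^k$ and analogously $P_j,Q_j$. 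I will evaluate $S(\theta^r)$ separately in the four strata of $\mathbb{Z}_{pq}$ given by the partition $\mathbb{Z}_{pq}=\{0\}\cup\bigcup_i(D_i\cup P_i\cup Q_i)$: namely $r=0$; $r$ a nonzero multiple of $p$ (so $r\in P_i$); $r$ a nonzero multiple of $q$ (so $r\in Q_i$); and $r$ invertible in $\mathbb{Z}_{pq}$ (so $r\in D_i$).

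The stratum $r=0$ is immediate: $S(1)=(pq-1)/2\bmod 2$, and since $p\equiv q\equiv 1\pmod 8$ this is $0$, so $r=0$ always lies in the vanishing set and accounts for the $-1$ in the theorem. In the $P$ and $Q$ strata, $\theta^r$ is a primitive $q$-th, respectively $p$-th, root of unity, so each of $D_j(\theta^r), P_j(\theta^r), Q_j(\theta^r)$ collapses to a classical Gaussian period of order $8$ over the appropriate field plus elementary multiplicity terms coming from $P_j$ (resp.\ $Q_j$). Whether these sums vanish in $\mathbb{F}_2$ depends only on the Frobenius orbit of $\theta^r$ under $x\mapsto x^2$, which is controlled by the class of $2$ in $\mathbb{F}_p^*/D_0^{(p)}$ (resp.\ $\mathbb{F}_q^*/D_0^{(q)}$): exactly when $(2/p)_8=1$, i.e.\ $2\in D_0^{(p)}$, is the set of Gaussian periods mod $p$ fixed pointwise by Frobenius, and this is what produces the factor $(p-1)/2$ (symmetrically $(q-1)/2$) in the theorem. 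For $r\in D_i$, multiplication by $r$ permutes the $D_j$'s cyclically, so that $D_j(\theta^r), P_j(\theta^r), Q_j(\theta^r)$ are re-indexed rotates of $D_{i+j}(\theta),$ etc.; via the Chinese Remainder isomorphism $\mathbb{Z}_{pq}^*\cong\mathbb{Z}_p^*\times\mathbb{Z}_q^*$ the cross-term $D_{i+j}(\theta)$ expands as a sum of products of Gaussian periods of order $8$ over $\mathbb{F}_p$ and over $\mathbb{F}_q$, weighted by Whiteman's generalized cyclotomic numbers and tied together by the classical Gaussian cyclotomic numbers of order $8$.

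The main obstacle is this $D_i$ stratum. Deciding when $S(\theta^r)=0$ in $\mathbb{F}_2$ requires, for each pair of Frobenius orbits of $2$ modulo $p$ and modulo $q$, evaluating an alternating sum of order-$8$ cyclotomic numbers modulo $2$. These numbers have explicit but case-dependent closed forms in terms of the biquadratic-and-octic representations $p=a^2+b^2=c^2+2d^2$ and $q=a'^2+b'^2=c'^2+2d'^2$, with sign ambiguities pinned down by congruences of $a,b,c,d$. The refined dichotomies $(2/p)_4,(2/q)_4,(p/q)_2,(p/q)_4$ appearing in the last brace of the theorem correspond precisely to the distinct mod-$2$ patterns of these cyclotomic numbers, and the bulk of the work in Section~\ref{sec:LCDH8} will consist in tracking these patterns to verify that exactly the five listed cases yield a nonzero contribution, with the stated multipliers $\tfrac12(p-1)(q-1)$ and $\tfrac14(p-1)(q-1)$.
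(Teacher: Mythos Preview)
Your plan follows the same Edemskiy--Antonova framework as the paper: stratify $\mathbb{Z}_{pq}$ by the CRT, count zeros of $S(\theta^r)$ on each stratum, and reduce to Gaussian periods of order $8$. Two simplifications in the paper are worth flagging, since without them your $D_i$-stratum analysis will be more laborious than you anticipate. First, the paper never invokes Whiteman's generalized cyclotomic numbers: the refinement $D_j=\bigcup_i D_{i,j}$ gives $\sum_{u\in D_{i,j}}\alpha^{ku}=S_8(\beta^{\cdots})\,T_8(\gamma^{\cdots})$, and after summing over $j=4,\dots,7$ this collapses (Lemma~\ref{lem:comp} and the Proposition following it) to the purely \emph{additive} identity $s_{i,j}=s_{i,8}+s_{8,j}+s_{8,j'}$ with $j'\equiv j-\mathrm{ind}_g^{(q)}p\pmod 8$. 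So the entire cross stratum is read off from the vector $\overrightarrow{A_8}(\beta)=(s_{i,8})_i$ and one cyclic shift of $\overrightarrow{B_8}(\gamma)=(s_{8,j})_j$; no bilinear cyclotomic-number weighting enters. Second, the paper does not attack the order-$8$ periods directly but bootstraps from the known $\overrightarrow{S_4}(\beta)$ via Lemma~\ref{lem:Sd}: $S_8(\beta^{g^i})+S_8(\beta^{g^{i+4}})=S_4(\beta^{g^i})$ and $S_8(\beta^{g^i})S_8(\beta^{g^{i+4}})=\sum_j(4,j-i)_8\,S_4(\beta^{g^j})+\frac{p-1}{8}\cdot\frac{1-(-1)^{(p-1)/8}}{2}$, so only the single row $(4,j)_8$, $0\le j\le 3$, of order-$8$ cyclotomic numbers is needed rather than the full table.

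One genuine gap in your sketch: your Frobenius argument for the $Q_i$ stratum shows only that the eight values $s_{i,8}$ lie in $\mathbb{F}_2$ when $(2/p)_8=1$; it does not by itself yield that \emph{exactly four} of them vanish, nor that \emph{none} vanish when $(2/p)_8\neq 1$ (indeed in that case the entries of $\overrightarrow{A_8}(\beta)$ live in $\mathrm{GF}(4)\setminus\mathrm{GF}(2)$ or $\mathrm{GF}(16)\setminus\mathrm{GF}(4)$). The paper establishes this by an explicit four-case computation on $(p\bmod 16,\ (2/p)_4)$ using the representations $p=x^2+4y^2=a^2+2b^2$, and that casework is where the octic-residue criterion actually earns its appearance.
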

\begin{itemize}
\item Here $\left(\frac{c}{p}\right)_{k}$ denotes the \emph{$k$-th power
symbol}. Namely, $\left(\frac{c}{p}\right)_{k}=1\Leftrightarrow$
there exists an integer $u$ such that $u^{k}\equiv c\not\equiv0$
$(\mathrm{mod}\ p)$.
\end{itemize}

\section{Computation methods}

For a sequence $s^{\infty}=(s_{i})_{i\geqslant0}=(s_{0},s_{1},s_{2},\dots,s_{i},\dots)$
over a field $F$, its \emph{linear complexity} (or \emph{linear span})
$L(s^{\infty})$ is defined as the least positive integer $L$ such
that there exists constants $c_{1},c_{2},\dots,c_{L}\in F$ satisfying
\[
s_{i}+c_{1}s_{i-1}+c_{2}s_{i-2}+\cdots+c_{L}s_{i-L}=0,\quad\text{ for any }i\geqslant L.
\]
If such $L$ do not exist, let $L(s^{\infty})=\infty$. For a ultimately
periodic sequence, such $L$ exists. A corresponding polynomial 
\[
m(x)=1+c_{1}x+\cdots+c_{L}x^{L}
\]
 with $L=L(s^{\infty})<\infty$ is called the minimal polynomial 
of the sequence $s^{\infty}$.

For a periodic sequence $s^{\infty}=(s_{i})_{i\geqslant0}$ of period
$N$, let $S(x)=s_{0}+s_{1}x+s_{2}x^{2}+\cdots s_{N-1}x^{N-1}$. Then
the minimal polynomial and linear complexity of $s^{\infty}$ can
be given by \cite{LidlNiederreiter1997finite}:
\begin{align*}
m(x) & =(x^{N}-1)/\mathrm{gcd}(x^{N}-1,S(x)),\\
L(s^{\infty})=\deg(m(x)) & =N-\deg(\mathrm{gcd}(x^{N}-1,S(x))).
\end{align*}
Moreover, assume that $s^{\infty}$ is binary (i.e., $F=\mathrm{GF}(2)$)
and $N$ is odd. Let $\alpha$ be a primitive $N$-th root of unity
in an extension field of $\mathrm{GF}(2)$. Then $X^{N}-1=\prod_{k=0}^{N-1}(X-\alpha^{k})$,
and
\begin{align*}
m(x) & =(x^{N}-1)/\prod_{0\leqslant k\leqslant N-1:S(\alpha^{k})=0}(X-\alpha^{k}),\\
L(s^{\infty}) & =N-|\{k:S(\alpha^{k})=0,\ 0\leqslant k\leqslant N-1\}|.
\end{align*}

For the Ding-Helleseth generalized cyclotomic sequence DH-GCS$_{d}$
with minimum period $N=pq$,
\[
S(x)=\sum_{j=d/2}^{d-1}(\sum_{u\in D_{j}}x^{u}+\sum_{u\in P_{j}}x^{u}+\sum_{u\in Q_{j}}x^{u}).
\]
Let us recall a method of Edemskiy and Antonova \cite{EdemskiyAntonova2011computation,EdemskiyAntonova2014linear}
to compute the linear complexity of DH-GCS$_{d}$.
\begin{itemize}
\item Let $\alpha$ be a primitive $pq$-th root of unity in an extension
field of $\mathrm{GF}(2)$.
\item Let $\beta=\alpha^{q}$ and $\gamma=\alpha^{p}$. Then $\beta$ (resp.
$\gamma$) is a primitive $p$-th (resp. $q$-th) root of unity.\end{itemize}
\begin{defn*}
Let us introduce the cyclotomic polynomials
\[
S_{d}(x)=\sum_{u\in D_{0}^{(p)}}x^{u}=\sum_{t=0}^{\frac{p-1}{d}-1}x^{g^{dt}},\qquad T_{d}(x)=\sum_{u\in D_{0}^{(q)}}x^{u}=\sum_{t=0}^{\frac{q-1}{d}-1}x^{g^{dt}},
\]
and quote from \cite{Edemskii2010linear,EdemskiyAntonova2014linear}
the following properties for $i,j\in\{0,1,\dots,d-1\}$ and $k\in\mathbb{Z}$:\end{defn*}
\begin{itemize}
\item $\sum_{u\in D_{i}^{(p)}}\beta^{u}=S_{d}(\beta^{g^{i}})=S_{d}(\beta^{g^{i+dk}})$,
and $\sum_{u\in D_{i}^{(q)}}\gamma^{u}=T_{d}(\gamma^{g^{i}})=T_{d}(\gamma^{g^{i+dk}})$.
\item $\sum_{i=0}^{d-1}S_{d}(\beta^{g^{i}})=\sum_{i=0}^{d-1}T_{d}(\gamma^{g^{i}})=1$.
\item $\sum_{u\in Q_{i}}\alpha^{ku}=S_{d}(\beta^{kg^{i}})$, and $\sum_{u\in P_{i}}\alpha^{ku}=T_{d}(\gamma^{kg^{i}})$.
\item Let $D_{i,j}=\{[g^{j+dt}f^{i-j}\ \mathrm{mod}\ pq]:t=0,1,\dots,\frac{e}{d}-1\}\subseteq D_{j}$.
Then $\bigcup_{i=0}^{d-1}D_{i,j}=D_{j}$, $[D_{i,j}\ \mathrm{mod}\ p]\subseteq D_{i}^{(p)}$,
$[D_{i,j}\ \mathrm{mod}\ q]\subseteq D_{j}^{(q)}$, and 
\[
\sum_{u\in D_{i,j}}\alpha^{ku}=S_{d}(\beta^{kg^{i-\mathrm{ind}_{g}^{(p)}q}})T_{d}(\gamma^{kg^{j-\mathrm{ind}_{g}^{(q)}p}}),
\]
where $\mathrm{ind}_{g}^{(q)}p$ denotes the \emph{discrete logarithm}
of $p$ in the field $\mathrm{GF}(q)$ relative to the basis $g$,
i.e., $p\equiv g^{\mathrm{ind}_{g}^{(q)}p}$ $(\mathrm{mod}\ q)$,
 and similarly $q\equiv g^{\mathrm{ind}_{g}^{(p)}q}(\mathrm{mod}\ p)$.\end{itemize}
\begin{lem}
[{\cite[Theorem 1]{EdemskiyAntonova2014linear}}]\label{lem:comp}Let
$k\in\mathbb{Z}$ and $\delta_{p}(k)=\begin{cases}
1, & \text{if }k\not\equiv0\ (\mathrm{mod}\ p),\\
0 & \text{if }k\equiv0\ (\mathrm{mod}\ p).
\end{cases}$ Then 
\[
S(\alpha^{k})=\sum_{t=d/2}^{d-1}(T_{d}(\gamma^{kg^{t-\mathrm{ind}_{g}^{(q)}p}})\delta_{p}(k)+T_{d}(\gamma^{kg^{t}})+S_{d}(\beta^{kg^{t}})).
\]
\end{lem}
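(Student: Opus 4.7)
The plan is to unfold the definition of $S(\alpha^k)$ and apply the three identities bulleted just before the statement, once each for the $Q_j$, $P_j$, and $D_j$ sums, using the decomposition $D_j = \bigcup_{i=0}^{d-1} D_{i,j}$ for the last one.

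First I would write
\[
S(\alpha^{k})=\sum_{j=d/2}^{d-1}\Bigl(\sum_{u\in D_{j}}\alpha^{ku}+\sum_{u\in P_{j}}\alpha^{ku}+\sum_{u\in Q_{j}}\alpha^{ku}\Bigr).
\]
The $Q_j$ and $P_j$ contributions are immediate from the bulleted identities: $\sum_{u\in Q_{j}}\alpha^{ku}=S_{d}(\beta^{kg^{j}})$ and $\sum_{u\in P_{j}}\alpha^{ku}=T_{d}(\gamma^{kg^{j}})$. These account for the last two summands in the claimed formula once one sets $t=j$.

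Next, for the $D_j$ contribution I would use the partition $D_j=\bigcup_{i=0}^{d-1}D_{i,j}$ and the identity
\[
\sum_{u\in D_{i,j}}\alpha^{ku}=S_{d}\bigl(\beta^{kg^{i-\mathrm{ind}_{g}^{(p)}q}}\bigr)\,T_{d}\bigl(\gamma^{kg^{j-\mathrm{ind}_{g}^{(q)}p}}\bigr).
\]
Since the $T_d$ factor does not depend on $i$, it pulls out of the inner sum, leaving
\[
\sum_{u\in D_{j}}\alpha^{ku}=T_{d}\bigl(\gamma^{kg^{j-\mathrm{ind}_{g}^{(q)}p}}\bigr)\sum_{i=0}^{d-1}S_{d}\bigl(\beta^{kg^{i-\mathrm{ind}_{g}^{(p)}q}}\bigr).
\]
The main step is to evaluate the inner sum and recognise it as $\delta_p(k)$. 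Since $\{i-\mathrm{ind}_{g}^{(p)}q:0\leq i\leq d-1\}$ is a complete residue system mod $d$, the sum equals $\sum_{i=0}^{d-1}S_{d}(\beta^{kg^{i}})$. If $k\not\equiv0\pmod{p}$, then $\beta^{k}$ is again a primitive $p$-th root of unity, and as $i$ varies the arguments $kg^{i}$ run through one representative from each coset of $D_{0}^{(p)}$ in $\mathbb{Z}_{p}^{*}$, so this sum equals $\sum_{v\in\mathbb{Z}_{p}^{*}}\beta^{v}=-1=1$ in $\mathrm{GF}(2)$. If $k\equiv0\pmod{p}$, then $S_{d}(1)=(p-1)/d$ and the total is $d\cdot(p-1)/d=p-1\equiv0\pmod{2}$. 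Both cases are captured by $\delta_{p}(k)$.

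Substituting back, the $D_j$ piece becomes $\delta_{p}(k)\,T_{d}(\gamma^{kg^{j-\mathrm{ind}_{g}^{(q)}p}})$, and adding the three contributions and relabelling $j\to t$ yields the claimed expression. The only real obstacle is the bookkeeping with the indices $i-\mathrm{ind}_{g}^{(p)}q$ and $j-\mathrm{ind}_{g}^{(q)}p$ and the use of characteristic~$2$ to collapse $\sum S_d(\beta^{g^i})$ to $1$; once these are handled carefully the identity follows mechanically.
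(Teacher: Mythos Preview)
Your proof is correct. The paper does not supply its own proof of this lemma (it is quoted as \cite[Theorem~1]{EdemskiyAntonova2014linear}), but the bulleted identities listed immediately before the statement are precisely the ingredients you invoke, so your argument is the intended one; the only point beyond direct substitution is your evaluation of $\sum_{i=0}^{d-1}S_{d}(\beta^{kg^{i}})$ as $\delta_{p}(k)$, which you handle correctly by splitting on whether $\beta^{k}$ is a primitive $p$-th root of unity and using characteristic~$2$.
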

\begin{defn*}
For $s^{\infty}=$ DH-GCS$_{d}$, define a matrix $\mathbb{S}=(s_{i,j})_{i,j=0}^{d}$
of order $d+1$ as follows.\end{defn*}
\begin{itemize}
\item Let $s_{i,j}=S(\alpha^{k})$ if $[k\ \mathrm{mod}\ pq]\in D_{i,j}$
with $0\leqslant i,j\leqslant d-1$. As $[k\ \mathrm{mod}\ p]\in D_{i}^{(p)}$
and $[k\ \mathrm{mod}\ q]\in D_{j}^{(q)}$, $s_{i,j}=\sum_{t=d/2}^{d-1}(T_{d}(\gamma^{g^{j+t-\mathrm{ind}_{g}^{(q)}p}})\delta+T_{d}(\gamma^{g^{j+t}})+S_{d}(\beta^{g^{i+t}}))$.
\item Let $s_{i,d}=S(\alpha^{k})$ if $[k\ \mathrm{mod}\ pq]\in Q_{i-\mathrm{ind}_{g}^{(p)}q}$
with $0\leqslant i\leqslant d-1$. As $[k\ \mathrm{mod}\ p]\in D_{i}^{(p)}$
and $k\equiv0$ $(\mathrm{mod}\ q)$, $s_{i,d}=\sum_{t=d/2}^{d-1}(T_{d}(1)+T_{d}(1)+S_{d}(\beta^{g^{i+t}}))=\sum_{t=\frac{d}{2}}^{d-1}S_{d}(\beta^{g^{i+t}})$.
\item Let $s_{d,j}=S(\alpha^{k})$ if $[k\ \mathrm{mod}\ pq]\in P_{j-\mathrm{ind}_{g}^{(q)}p}$
with $0\leqslant j\leqslant d-1$. As $[k\ \mathrm{mod}\ q]\in D_{j}^{(q)}$
and $k\equiv0$ $(\mathrm{mod}\ p)$, $S_{d}(1)=|D_{0}^{(p)}|=\frac{p-1}{d}$
and $s_{d,j}=\sum_{t=d/2}^{d-1}T_{d}(\gamma^{g^{j+t}})+\frac{p-1}{2}$.
\item Let $s_{d,d}=S(\alpha^{0})=S(1)=\sum_{t=d/2}^{d-1}(T_{d}(1)+S_{d}(1))=\frac{p-1}{2}+\frac{q-1}{2}=\begin{cases}
0 & \text{if }p\equiv q\ (\mathrm{mod}\ 4),\\
1 & \text{otherwise}.
\end{cases}$\end{itemize}
\begin{defn}
For a vector $(a_{1},a_{2},\dots,a_{r})$, define a \emph{transformation}
$\sigma_{u}$ by
\[
\sigma_{u}(a_{1},a_{2},\dots,a_{r})=(a_{u+1},a_{u+2},\dots,a_{r},a_{1},a_{2},\dots,a_{u})
\]
 for an integer $0\leqslant u<r$. For any $u'\in\mathbb{Z}$ with
$u'\equiv u$ $(\mathrm{mod}\ r)$, define $\sigma_{u'}=\sigma_{u}$.
Let  
\begin{align*}
\overrightarrow{S_{d}}(x) & =(S_{d}(x),S_{d}(x^{g}),S(x^{g^{2}}),\dots,S(x^{g^{d-1}})),\\
\overrightarrow{T_{d}}(x) & =(T_{d}(x),T_{d}(x^{g}),T(x^{g^{2}}),\dots,T(x^{g^{d-1}})),\\
\overrightarrow{A_{d}}(x) & =\sum_{t=d/2}^{d-1}\overrightarrow{S_{d}}(x^{g^{t}}),\quad\overrightarrow{B_{d}}(x)=\sum_{t=d/2}^{d-1}\overrightarrow{T_{d}}(x^{g^{t}}).
\end{align*}
\end{defn}
\begin{prop}
If $4\mid d=\mathrm{gcd}(p-1,q-1)$, then we have \textup{$s_{d,d}=0$,
\[
(s_{i,d})_{i=0}^{d-1}=\overrightarrow{A_{d}}(\beta)=\sum_{t=d/2}^{d-1}\sigma_{t}(\overrightarrow{S_{d}}(\beta)),\quad(s_{d,j})_{j=0}^{d-1}=\overrightarrow{B_{d}}(\gamma)=\sum_{t=d/2}^{d-1}\sigma_{t}(\overrightarrow{T_{d}}(\gamma)),
\]
}and $s_{i,j}=s_{i,d}+s_{d,j}+s_{d,j'}$ for $0\leqslant i,j,j'\leqslant d-1$
with $j'\equiv j-\mathrm{ind}_{g}^{(q)}p$ $(\mathrm{mod}\ d)$. Note
that
\[
(s_{d,j}+s_{d,j'})_{j=0}^{d-1}=\overrightarrow{B_{d}}(\gamma)+\sigma_{-\mathrm{ind}_{g}^{(q)}p}(\overrightarrow{B_{d}}(\gamma)).
\]
\end{prop}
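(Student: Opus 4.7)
The plan is to verify each of the five assertions in turn, exploiting the hypothesis $4\mid d$ to kill some parity terms and to read each sum as a coordinate of a shifted vector.

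First I would dispose of $s_{d,d}=0$. Since $4\mid d$ divides both $p-1$ and $q-1$, we have $p\equiv q\equiv 1\pmod 4$, so $(p-1)/2$ and $(q-1)/2$ are both even; reading the table entry for $s_{d,d}$ in $\mathrm{GF}(2)$ gives $0$. The same observation, that $(p-1)/2\equiv 0\pmod 2$, removes the constant term from the formula for $s_{d,j}$ listed just above the proposition, leaving $s_{d,j}=\sum_{t=d/2}^{d-1}T_{d}(\gamma^{g^{j+t}})$ in $\mathrm{GF}(2)$.

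Next I would establish the cyclic-shift identity $\overrightarrow{S_{d}}(x^{g^{t}})=\sigma_{t}(\overrightarrow{S_{d}}(x))$, and its analog for $\overrightarrow{T_{d}}$. This is just unwinding the definition: the $i$-th component of $\overrightarrow{S_{d}}(x^{g^{t}})$ is $S_{d}(x^{g^{i+t}})$, which equals the $(i+t)$-th component of $\overrightarrow{S_{d}}(x)$, so shifting the argument by $g^{t}$ cyclically rotates the coordinate vector by $t$. Combining this with $s_{i,d}=\sum_{t=d/2}^{d-1}S_{d}(\beta^{g^{i+t}})$ reads off both the equality $(s_{i,d})_{i}=\overrightarrow{A_{d}}(\beta)$ and the rewriting $\overrightarrow{A_{d}}(\beta)=\sum_{t=d/2}^{d-1}\sigma_{t}(\overrightarrow{S_{d}}(\beta))$, and symmetrically for the row $(s_{d,j})_{j}$.

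For the main identity $s_{i,j}=s_{i,d}+s_{d,j}+s_{d,j'}$, I would start from the expression for $s_{i,j}$ given above Lemma \ref{lem:comp}. The key point is that $[k\bmod pq]\in D_{i,j}\subseteq\mathbb{Z}_{pq}^{*}$ forces $\gcd(k,pq)=1$, in particular $k\not\equiv 0\pmod p$, so $\delta_{p}(k)=1$; hence the $\delta$-factor in the formula for $s_{i,j}$ simply drops. Splitting the resulting triple sum into its three blocks, the block $\sum_{t}S_{d}(\beta^{g^{i+t}})$ is $s_{i,d}$, the block $\sum_{t}T_{d}(\gamma^{g^{j+t}})$ is $s_{d,j}$ (using the vanishing of the constant term noted in the first step), and the last block $\sum_{t}T_{d}(\gamma^{g^{j+t-\mathrm{ind}_{g}^{(q)}p}})$ is $s_{d,j'}$ with $j'\equiv j-\mathrm{ind}_{g}^{(q)}p\pmod d$. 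Finally, the vector formula for $(s_{d,j}+s_{d,j'})_{j=0}^{d-1}$ is immediate from the shift identity, since replacing $j$ by $j-\mathrm{ind}_{g}^{(q)}p$ in every coordinate of $\overrightarrow{B_{d}}(\gamma)$ is by definition $\sigma_{-\mathrm{ind}_{g}^{(q)}p}$ applied to it.

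I do not expect a hard step here; the only subtle point is bookkeeping the $\mathrm{GF}(2)$ parity of $(p-1)/2$ and $(q-1)/2$, which is exactly why the hypothesis $4\mid d$ is imposed. Without it, the constants $(p-1)/2$ and $(q-1)/2$ would contribute, $s_{d,d}$ would generally be $1$, and the clean additive relation $s_{i,j}=s_{i,d}+s_{d,j}+s_{d,j'}$ would pick up corrective $1$'s.
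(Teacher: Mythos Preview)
Your proof is correct and matches the paper's implicit reasoning: the proposition is stated there without proof, as it is meant to follow directly from the bullet-point formulas for $s_{i,j}$, $s_{i,d}$, $s_{d,j}$, $s_{d,d}$ given just above it, together with the definition of $\overrightarrow{A_d}$, $\overrightarrow{B_d}$. You have unpacked exactly these definitions, correctly noting that $4\mid d$ forces $(p-1)/2\equiv(q-1)/2\equiv0$ in $\mathrm{GF}(2)$ and that $k\in D_{i,j}$ gives $\delta_p(k)=1$.
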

\begin{lem}
\label{lem:method}To calculate the linear complexity and minimal
polynomial of $s^{\infty}=$ \textup{DH-GCS}$_{d}$, it suffices to
determine the zero elements of the matrix $\mathbb{S}$. As $|D_{i,j}|=e/d$,
$|Q_{i}|=(p-1)/d$, $|P_{j}|=(q-1)/d$, we have 
\begin{align*}
L(s^{\infty}) & =pq-\frac{e}{d}\cdot|\{0\leqslant i,j\leqslant d-1:s_{i,j}=0\}|-\frac{p-1}{d}\cdot|\{0\leqslant i\leqslant d-1:s_{i,d}=0\}|\\
 & \quad-\frac{q-1}{d}\cdot|\{0\leqslant j\leqslant d-1:s_{d,j}=0\}|-\begin{cases}
1 & \text{if }p\equiv q\ (\mathrm{mod}\ 4),\\
0 & \text{otherwise}.
\end{cases}
\end{align*}
Let $\mathbf{d}(x)=\begin{cases}
x-1 & \text{if }p\equiv q\ (\mathrm{mod}\ 4),\\
1 & \text{otherwise},
\end{cases}$ $\mathbf{D}_{i,j}(x)={\displaystyle \prod_{k\in D_{i,j}}(x-\alpha^{k})}$,
$\mathbf{P}_{j}(x)={\displaystyle \prod_{k\in P_{j}}}(x-\alpha^{k})$,
and $\mathbf{Q}_{i}(x)={\displaystyle \prod_{k\in Q_{i}}}(x-\alpha^{k})$,
then the minimal polynomial of \textup{DH-GCS}$_{d}$ is
\[
m(x)=(x^{pq}-1)/(\mathbf{d}(x)\prod_{\substack{0\leqslant i,j\leqslant d-1,\\
s_{i,j}=0
}
}\mathbf{D}_{i,j}(x)\prod_{\substack{0\leqslant j\leqslant d-1,\\
s_{d,j}=0
}
}\mathbf{P}_{j-\mathrm{ind}_{g}^{(q)}p}(x)\prod_{\substack{0\leqslant i\leqslant d-1,\\
s_{i,d}=0
}
}\mathbf{Q}_{i-\mathrm{ind}_{g}^{(p)}q}(x)).
\]

\end{lem}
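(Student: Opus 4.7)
The plan is to derive both formulas directly from the identity
$L(s^{\infty}) = pq - |\{0 \le k \le pq-1 : S(\alpha^{k})=0\}|$
stated earlier in this section, by combining it with the partition of $\mathbb{Z}_{pq}$ into Ding--Helleseth cyclotomic pieces and showing that $S(\alpha^{k})$ is constant on each piece, with that constant recorded in the matrix $\mathbb{S}$.

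First I would identify $\{0,1,\dots,pq-1\}$ with $\mathbb{Z}_{pq}$ and use the disjoint decomposition
$\mathbb{Z}_{pq} = \{[0]\} \cup \bigcup_{0 \le i,j \le d-1} D_{i,j} \cup \bigcup_{0 \le i \le d-1} Q_{i} \cup \bigcup_{0 \le j \le d-1} P_{j}$
that follows from $\mathbb{Z}_{pq}^{*} = \bigcup_{j} D_{j} = \bigcup_{i,j} D_{i,j}$ and the itemised properties listed before Lemma \ref{lem:comp}. Then I would invoke Lemma \ref{lem:comp} together with the bullet-point identities $\sum_{u \in Q_{i}}\alpha^{ku} = S_{d}(\beta^{kg^{i}})$, $\sum_{u \in P_{i}}\alpha^{ku} = T_{d}(\gamma^{kg^{i}})$, and the explicit formula $\sum_{u \in D_{i,j}}\alpha^{ku} = S_{d}(\beta^{kg^{i-\mathrm{ind}_{g}^{(p)}q}})T_{d}(\gamma^{kg^{j-\mathrm{ind}_{g}^{(q)}p}})$. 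These show that if $[k \bmod pq] \in D_{i,j}$ then $S(\alpha^{k})$ equals the value already denoted by $s_{i,j}$; if $[k \bmod pq] \in Q_{i'}$ (so $q \mid k$, hence $\delta_{p}(k)=1$ and the $T_{d}(\gamma^{kg^{\cdot}})$ terms collapse) then $S(\alpha^{k})$ equals $s_{i'+\mathrm{ind}_{g}^{(p)}q,\,d}$; symmetrically on $P_{j'}$ one gets $s_{d,\,j'+\mathrm{ind}_{g}^{(q)}p}$; and at $k=0$ one gets $s_{d,d}$, whose vanishing was already computed to be equivalent to $p \equiv q \pmod 4$.

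Once this constancy is in hand, the counting formula for $L(s^{\infty})$ is immediate: sum the cardinalities $|D_{i,j}|=e/d$, $|Q_{i'}|=(p-1)/d$, $|P_{j'}|=(q-1)/d$, and $|\{0\}|=1$ weighted by the indicator of $s_{i,j}=0$, $s_{i,d}=0$, $s_{d,j}=0$, and $s_{d,d}=0$ respectively, and subtract from $pq$. For the minimal polynomial I would apply the identity $m(x) = (x^{pq}-1)/\prod_{S(\alpha^{k})=0}(x-\alpha^{k})$ from the start of this section and factor the denominator along the same partition, so each vanishing block $D_{i,j}$ contributes $\mathbf{D}_{i,j}(x)$, each vanishing $Q_{i-\mathrm{ind}_{g}^{(p)}q}$ contributes $\mathbf{Q}_{i-\mathrm{ind}_{g}^{(p)}q}(x)$, each vanishing $P_{j-\mathrm{ind}_{g}^{(q)}p}$ contributes $\mathbf{P}_{j-\mathrm{ind}_{g}^{(q)}p}(x)$, and the isolated root $k=0$ contributes $x-1$ exactly when $p \equiv q \pmod 4$.

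The only nontrivial obstacle is the index bookkeeping: the row index of $s_{i,d}$ is defined via $[k \bmod p]\in D_{i}^{(p)}$, whereas an element $k\in Q_{i'}$ satisfies $[k \bmod p] = q\cdot g^{i'+dt} \in D_{i'+\mathrm{ind}_{g}^{(p)}q}^{(p)}$, which explains the shift $i\mapsto i-\mathrm{ind}_{g}^{(p)}q$ appearing in the product over $\mathbf{Q}$'s, and similarly for $\mathbf{P}$'s. Once these shifts are tracked carefully, the two stated formulas assemble directly, with no genuinely hard step.
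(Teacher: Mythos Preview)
Your proposal is correct and follows exactly the approach the paper sets up: the paper does not write out a separate proof of this lemma, since by the time it is stated the definition of $\mathbb{S}$ already records (and verifies the well-definedness of) $S(\alpha^{k})$ on each piece $D_{i,j}$, $Q_{i-\mathrm{ind}_g^{(p)}q}$, $P_{j-\mathrm{ind}_g^{(q)}p}$, $\{0\}$, so the formulas for $L(s^{\infty})$ and $m(x)$ follow immediately from $L(s^{\infty})=pq-|\{k:S(\alpha^{k})=0\}|$ and $m(x)=(x^{pq}-1)/\prod_{S(\alpha^{k})=0}(x-\alpha^{k})$. Your write-up, including the index-shift bookkeeping for the $Q$ and $P$ blocks, is precisely this argument made explicit.
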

The choice of a primitive $pq$-th root $\alpha$ of unity gives some
flexibility for our computation.
\begin{lem}
\label{lem:choice}For any $h,l\in\{0,1,2,\dots,d-1\}$, take any
$k\in D_{h,l}$, and let $\alpha'=\alpha^{k}$, $\beta'=(\alpha')^{q}=\beta^{g^{h}}$,
$\gamma'=(\alpha')^{p}=\gamma^{g^{l}}$. Then $\alpha'$, $\beta'$,
$\gamma'$ are primitive $pq$-th, $p$-th, $q$-th root of unity
respectively. Note that $S_{d}((\beta')^{g^{i}})=S_{d}(\beta^{g^{i+h}})$,
and $T_{d}((\gamma')^{g^{i}})=T_{d}(\gamma^{g^{i+l}})$ for $0\leqslant i\leqslant d-1$.
Therefore,
\begin{alignat*}{2}
\overrightarrow{S_{d}}(\beta') & =\sigma_{h}(\overrightarrow{S_{d}}(\beta)),\qquad & \overrightarrow{A_{d}}(\beta') & =\sigma_{h}(\overrightarrow{A_{d}}(\beta)),\\
\overrightarrow{T_{d}}(\gamma') & =\sigma_{l}(\overrightarrow{T_{d}}(\gamma)), & \overrightarrow{B_{d}}(\gamma') & =\sigma_{l}(\overrightarrow{B_{d}}(\gamma)).
\end{alignat*}
\end{lem}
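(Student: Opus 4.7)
The plan is to unwind the definition of $D_{h,l}$ and invoke the $g^d$-invariance of $S_d$ and $T_d$ that was quoted earlier from Edemskii's work. First I would observe that any $k\in D_{h,l}$ has the form $k\equiv g^{l+dt}f^{h-l}\pmod{pq}$ for some $t$, so by $f\equiv g\pmod{p}$ and $f\equiv 1\pmod{q}$ we obtain $k\equiv g^{h+dt}\pmod{p}$ and $k\equiv g^{l+dt}\pmod{q}$. Since $\gcd(k,pq)=1$, $\alpha'=\alpha^{k}$ is primitive of order $pq$, and then $\beta'=(\alpha')^{q}$ and $\gamma'=(\alpha')^{p}$ are automatically primitive of orders $p$ and $q$ respectively.

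Next I would compute $(\beta')^{g^{i}}=\beta^{kg^{i}}=\beta^{g^{i+h+dt}}$ (using that $\beta$ has order $p$), and apply the quoted invariance $S_{d}(\beta^{g^{j+dk}})=S_{d}(\beta^{g^{j}})$ to conclude $S_{d}((\beta')^{g^{i}})=S_{d}(\beta^{g^{i+h}})$. A mirror computation, in which the $f^{h-l}$ factor becomes $1$ modulo $q$, yields $T_{d}((\gamma')^{g^{i}})=T_{d}(\gamma^{g^{i+l}})$. The vector identities then follow coordinatewise: the $i$-th entry of $\overrightarrow{S_{d}}(\beta')$ is $S_{d}(\beta^{g^{i+h}})$, which after reducing the exponent index modulo $d$ (again by $g^{d}$-invariance) agrees with the $i$-th entry of $\sigma_{h}(\overrightarrow{S_{d}}(\beta))$ by the definition of $\sigma_{h}$. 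The identity $\overrightarrow{A_{d}}(\beta')=\sigma_{h}(\overrightarrow{A_{d}}(\beta))$ is then immediate, since $\overrightarrow{A_{d}}$ is a $\mathbb{Z}$-linear combination of shifted $\overrightarrow{S_{d}}$'s and $\sigma_{h}$ commutes with such combinations; the $\gamma'$, $\overrightarrow{T_{d}}$, $\overrightarrow{B_{d}}$ claims are proved by the symmetric argument.

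I do not anticipate significant obstacles; the proof is essentially bookkeeping. The one subtle point worth flagging is that the displayed equality $\beta'=\beta^{g^{h}}$ in the statement is literally $\beta'=\beta^{g^{h+dt}}$ for the specific $t$ coming from the chosen $k\in D_{h,l}$, and these are generally distinct field elements. This discrepancy is, however, exactly what is absorbed by the $g^{d}$-invariance of $S_{d}$, so every assertion in the conclusion of the lemma (which only involves $S_{d}$, $T_{d}$, and the vectors built from them) remains correct independently of the choice of representative of $D_{h,l}$, thereby delivering the promised flexibility in choosing the primitive $pq$-th root $\alpha$.
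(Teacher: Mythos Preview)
Your proposal is correct and matches the paper's approach: the paper states this lemma without a separate proof, treating it as an immediate consequence of the definition of $D_{h,l}$ together with the quoted $g^{d}$-invariance $S_{d}(\beta^{g^{j+dk}})=S_{d}(\beta^{g^{j}})$ and $T_{d}(\gamma^{g^{j+dk}})=T_{d}(\gamma^{g^{j}})$, which is exactly what you unwind. Your flag about the literal equality $\beta'=\beta^{g^{h}}$ versus $\beta'=\beta^{g^{h+dt}}$ is a fair observation about a small imprecision in the statement; as you note, it is harmless because only $S_{d}$- and $T_{d}$-values enter the conclusions.
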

\begin{itemize}
\item In the following computation, $\overrightarrow{S_{d}}(\beta)$, $\overrightarrow{A_{d}}(\beta)$,
$\overrightarrow{T_{d}}(\gamma)$ and $\overrightarrow{B_{d}}(\gamma)$
can be written freely up to transformations of the form $\sigma_{h}$,
by some good choice of $\alpha$.
\end{itemize}

For an even divisor $d$ of $p-1$, the values of $S_{d}(\beta^{g^{i}})=\sum_{t=0}^{\frac{p-1}{d}-1}\beta^{g^{i+dt}}$
(with $0\leqslant i\leqslant d-1$) can be deduced from the values
of $S_{d/2}(\beta^{g^{j}})=\sum_{t=0}^{\frac{p-1}{d/2}-1}\beta^{g^{j+\frac{d}{2}t}}$
(with $0\leqslant j\leqslant d/2-1$) by the following lemma read
off from \cite[\S 3]{Edemskii2010linear}, with the help of classical
cyclotomic numbers of order $d$, introduced by Gauss in his famous
book \textquotedblleft Disquisitiones Arithmeticae\textquotedblright{}
\cite{GaussDisquisitiones}, defined as
\begin{align*}
(i,j)_{d} & =|(D_{i}^{(p)}+1)\cap D_{j}^{(p)}|\\
 & =|\{(u,v)\in\mathbb{Z}^{2}:g^{i+du}+1\equiv g^{j+dv}\ (\mathrm{mod}\ p),\ 0\leqslant u,v<\frac{p-1}{d}\}|.
\end{align*}
For fixed $(p,g)$, note that $(i,j)_{d}$ depends only on the residue
classes $[i\ \mathrm{mod}\ d]$ and $[j\ \mathrm{mod}\ d]$.
\begin{lem}
\label{lem:Sd}For $0\leqslant i\leqslant d/2-1$, we have $S_{d}(\beta^{g^{i}})+S_{d}(\beta^{g^{i+d/2}})=S_{d/2}(\beta^{g^{i}})$
and 
\begin{align*}
S_{d}(\beta^{g^{i}})S_{d}(\beta^{g^{i+d/2}}) & =\sum_{j=0}^{d/2-1}(d/2,j-i)_{d}S_{d/2}(\beta^{g^{j}})+\frac{p-1}{d}\cdot\frac{1-(-1)^{\frac{p-1}{d}}}{2}.
\end{align*}
\end{lem}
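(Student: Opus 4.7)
The plan is to attack the two identities separately and to reduce the product formula to a single identity on cyclotomic numbers.

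For the additive identity, I would just unfold the definitions. The exponents appearing in $S_d(\beta^{g^i})$ and $S_d(\beta^{g^{i+d/2}})$ are the sets $\{g^{i+dt}\bmod p:0\leqslant t<(p-1)/d\}$ and $\{g^{i+d/2+dt}\bmod p:0\leqslant t<(p-1)/d\}$ respectively, and their (disjoint) union is exactly $\{g^{i+(d/2)s}\bmod p:0\leqslant s<2(p-1)/d\}$, which is the index set of $S_{d/2}(\beta^{g^i})$. Since we work in characteristic two, the identity follows immediately.

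For the product identity, I would expand
\[
S_d(\beta^{g^i})\,S_d(\beta^{g^{i+d/2}})=\sum_{a\in D_i^{(p)}}\sum_{b\in D_{i+d/2}^{(p)}}\beta^{a+b},
\]
partition the double sum by the value of $c=a+b\in\mathbb{Z}/p$, and count how many pairs map to each coset $D_k^{(p)}$ or to $0$. For $c=0$ the condition $b=-a$ can be satisfied only when $-1\in D_{d/2}^{(p)}$, which happens exactly when $(p-1)/d$ is odd; this yields the constant $\frac{p-1}{d}\cdot\frac{1-(-1)^{(p-1)/d}}{2}$. For $c\neq 0$, the multiplicative action of $D_0^{(p)}$ makes the count of pairs depend only on the coset $D_k^{(p)}$ of $c$; factoring out $g^{i+du}$ from $a+b=c$ transforms it into $1+g^{d/2+ds}=g^{k-i+dt}$, so by the definition of the cyclotomic numbers each $c\in D_k^{(p)}$ is hit exactly $(d/2,k-i)_d$ times. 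The product therefore equals
\[
\tfrac{p-1}{d}\cdot\tfrac{1-(-1)^{(p-1)/d}}{2}+\sum_{k=0}^{d-1}(d/2,k-i)_d\,S_d(\beta^{g^k}).
\]

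To collapse the sum over $k\in\{0,\dots,d-1\}$ into one over $j\in\{0,\dots,d/2-1\}$ via the additive identity, I need to pair the terms $k=j$ and $k=j+d/2$, which requires the cyclotomic identity
\[
(d/2,\,m)_d=(d/2,\,m+d/2)_d\qquad\text{for every }m\in\mathbb{Z}.
\]
This is the only nontrivial step and will be the main obstacle. My plan is to prove it via the involution $x\mapsto x^{-1}$ on $D_{d/2}^{(p)}$. Since $-d/2\equiv d/2\pmod d$, this map sends $D_{d/2}^{(p)}$ to itself. If $x\in D_{d/2}^{(p)}$ satisfies $x+1\in D_m^{(p)}$, then $x^{-1}\in D_{d/2}^{(p)}$ and $x^{-1}+1=(x+1)/x\in D_{m-d/2}^{(p)}=D_{m+d/2}^{(p)}$, giving the required bijection. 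With this identity, the product simplifies to $\frac{p-1}{d}\cdot\frac{1-(-1)^{(p-1)/d}}{2}+\sum_{j=0}^{d/2-1}(d/2,j-i)_d\,S_{d/2}(\beta^{g^j})$, which is the claim.
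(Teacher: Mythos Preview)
Your proof is correct and follows essentially the same approach as the paper: expand the product, sort the terms by the coset of $a+b$, identify the counts as the cyclotomic numbers $(d/2,k-i)_d$, and collapse the sum using $(d/2,m)_d=(d/2,m+d/2)_d$, which you prove via the involution $x\mapsto x^{-1}$ on $D_{d/2}^{(p)}$ and the paper proves by the equivalent manoeuvre of dividing $1+g^{d/2+du}\equiv g^{k+dv}$ through by $g^{d/2+du}$. One minor remark: the additive identity does not actually rely on characteristic two---it is simply a disjoint-union reindexing of the exponent set and holds in any ring.
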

\begin{proof}
By definition, $S_{d}(\beta^{g^{i}})+S_{d}(\beta^{g^{i+d/2}})=\sum_{t=0}^{\frac{p-1}{d}-1}(\beta^{g^{i+dt}}+\beta^{g^{i+d/2+dt}})=\sum_{t=0}^{\frac{p-1}{d/2}-1}\beta^{g^{i+\frac{d}{2}t}}=S_{d/2}(\beta^{g^{i}})$,
and $S_{d}(\beta^{g^{i}})S_{d}(\beta^{g^{i+d/2}})=\sum_{u,v=0}^{\frac{p-1}{d}-1}\beta^{g^{i+du}+g^{i+d/2+dv}}$.
For fixed integers $0\leqslant j<d$ and $0\leqslant t<\frac{p-1}{d}$,
as $g^{i+du}+g^{i+d/2+dv}\equiv g^{j+dt}$ $(\mathrm{mod}\ p)$ $\Leftrightarrow$
$1+g^{d/2+d(v-u)}\equiv g^{j-i+d(t-u)}$ $(\mathrm{mod}\ p)$, we
see that
\[
(d/2,j-i)_{d}=|\{(u,v)\in\mathbb{Z}^{2}:g^{i+du}+g^{i+d/2+dv}\equiv g^{j+dt}\ (\mathrm{mod}\ p),0\leqslant u,v<\frac{p-1}{d}\}|.
\]
As $g^{i+du}+g^{i+d/2+dv}\equiv0$ $(\mathrm{mod}\ p)$ $\Leftrightarrow$
$g^{d/2+d(v-u)}\equiv-1\equiv g^{\frac{p-1}{2}}$ $(\mathrm{mod}\ p)$
$\Leftrightarrow$ $d/2+d(v-u)\equiv\frac{p-1}{2}$ $(\mathrm{mod}\ p-1)$
$\Leftrightarrow$ $v-u\equiv\frac{1}{2}(\frac{p-1}{d}-1)$ $(\mathrm{mod}\ \frac{p-1}{d})$,
we see that
\begin{align*}
\frac{p-1}{d}\cdot\frac{1-(-1)^{\frac{p-1}{d}}}{2} & =\begin{cases}
\frac{p-1}{d}, & \text{if }\frac{p-1}{d}\text{ is odd},\\
0, & \text{otherwise},
\end{cases}\\
 & =|\{(u,v)\in\mathbb{Z}^{2}:g^{i+du}+g^{i+d/2+dv}\equiv0\ (\mathrm{mod}\ p),0\leqslant u,v<\frac{p-1}{d}\}|.
\end{align*}
Also note that $1+g^{d/2+du}\equiv g^{k+dv}\ (\mathrm{mod}\ p)\Leftrightarrow g^{-d/2-du}+1\equiv g^{k-d/2+d(v-u)}\ (\mathrm{mod}\ p)$,
so $(d/2,k)_{d}=(-d/2,k-d/2)_{d}=(d/2,k+d/2)_{d}$, and in particular
$(d/2,j-i)_{d}=(d/2,j+d/2-i)_{d}$. In all, we have 
\begin{align*}
S_{d}(\beta^{g^{i}})S_{d}(\beta^{g^{i+d/2}}) & =\sum_{j=0}^{d-1}\sum_{t=0}^{\frac{p-1}{d}-1}(d/2,j-i)_{d}\beta^{g^{j+dt}}+\frac{p-1}{d}\cdot\frac{1-(-1)^{\frac{p-1}{d}}}{2}\\
 & =\sum_{j=0}^{d-1}(d/2,j-i)_{d}S_{d}(\beta^{g^{j}})+\frac{p-1}{d}\cdot\frac{1-(-1)^{\frac{p-1}{d}}}{2}\\
 & =\sum_{j=0}^{d/2-1}(d/2,j-i)_{d}S_{d/2}(\beta^{g^{j}})+\frac{p-1}{d}\cdot\frac{1-(-1)^{\frac{p-1}{d}}}{2}.\qedhere
\end{align*}
\end{proof}
\begin{cor}
\label{cor:Ad}Suppose that $\overrightarrow{A_{d}}(\beta)=(c_{0},c_{1},c_{2}\dots,c_{d-1})$.
Then $c_{0}=\sum_{t=d/2}^{d-1}S_{d}(\beta^{g^{t}})$, and 
\begin{alignat*}{2}
c_{i+1}-c_{i} & =S_{d/2}(\beta^{g^{i}}), &  & \text{for }0\leqslant i\leqslant d-2,\\
c_{j} & =\sum_{t=d/2}^{d-1}S_{d}(\beta^{g^{t}})+\sum_{i=0}^{j-1}S_{d/2}(\beta^{g^{i}}), & \quad & \text{for }1\leqslant j\leqslant d-1.
\end{alignat*}
\end{cor}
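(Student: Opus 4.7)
The plan is to directly unpack the definition of $\overrightarrow{A_d}(\beta)$ coordinate by coordinate, then reduce to a telescoping sum whose adjacent differences are exactly what Lemma \ref{lem:Sd} evaluates.

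First I would observe, from the definition $\overrightarrow{S_d}(x)=(S_d(x),S_d(x^g),\dots,S_d(x^{g^{d-1}}))$, that the $i$-th entry of $\overrightarrow{S_d}(\beta^{g^t})$ is $S_d(\beta^{g^{t+i}})$, so that
\[
c_i \;=\; \sum_{t=d/2}^{d-1} S_d(\beta^{g^{t+i}}).
\]
Setting $i=0$ gives $c_0=\sum_{t=d/2}^{d-1}S_d(\beta^{g^t})$ immediately. I would also record that $S_d(\beta^{g^{t+d}})=S_d(\beta^{g^t})$, since $g^{p-1}\equiv 1\pmod p$ makes the exponents of $g$ effectively cyclic of period $d$ in the expression $S_d(\beta^{g^{\bullet}})$.

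Next I would compute $c_{i+1}-c_i$ by reindexing: writing $c_{i+1}=\sum_{t=d/2+1}^{d}S_d(\beta^{g^{t+i}})$ and using the periodicity above to replace the $t=d$ summand by $S_d(\beta^{g^i})$, the middle terms (for $t=d/2+1,\dots,d-1$) cancel against the same range in $c_i$, leaving
\[
c_{i+1}-c_i \;=\; S_d(\beta^{g^i}) - S_d(\beta^{g^{i+d/2}}).
\]
Since we are working over $\mathrm{GF}(2)$, the minus sign is a plus sign, and Lemma \ref{lem:Sd} identifies the result as $S_{d/2}(\beta^{g^i})$. This gives the first displayed identity in the corollary.

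Finally, the expression for $c_j$ with $1\leqslant j\leqslant d-1$ follows by a one-line telescoping: $c_j = c_0 + \sum_{i=0}^{j-1}(c_{i+1}-c_i)$, into which one substitutes the values of $c_0$ and of the differences just obtained. There is no real obstacle; the only subtle point is being careful about the cyclic indexing of $g$-exponents mod $d$ (which is what lets the endpoint $t=d$ fold back to $t=0$) and remembering that the minus-to-plus conversion uses characteristic $2$.
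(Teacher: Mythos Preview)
Your proof is correct and follows essentially the same approach as the paper: you unpack $c_i=\sum_{t=d/2}^{d-1}S_d(\beta^{g^{i+t}})$ from the definition, telescope to get $c_{i+1}-c_i=S_d(\beta^{g^i})+S_d(\beta^{g^{i+d/2}})$ in characteristic $2$, and invoke Lemma~\ref{lem:Sd}. The paper's proof is terser but identical in substance.
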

\begin{proof}
By definition, $c_{i}=\sum_{t=d/2}^{d-1}S_{d}(\beta^{g^{i+t}})$ for
$0\leqslant i\leqslant d-1$. So $c_{i+1}-c_{i}=S_{d}(\beta^{i+d})-S_{d}(\beta^{i+d/2})=S_{d}(\beta^{i})+S_{d}(\beta^{i+d/2})=S_{d/2}(\beta^{g^{i}})$
for $0\leqslant i\leqslant d-2$. Then the last expression for $c_{j}$
is clear.
\end{proof}

\section{\label{sec:A8}Calculation of \texorpdfstring{$\protect\overrightarrow{A_{8}}(\beta)$}{{A\_8$(\beta)$}}}

In this section we calculate $\overrightarrow{A_{8}}(\beta)$ (up
to transformations $\sigma_{h}$),  by Lemma \ref{lem:Sd} and Corollary
\ref{cor:Ad}, from the values of $S_{4}(\beta^{g^{j}})$ and the
Gaussian classical cyclotomic numbers $(4,j)_{8}$ for $0\leqslant j\leqslant3$.
\begin{itemize}
\item Let $p=x^{2}+4y^{2}=a^{2}+2b^{2}\equiv1$ $(\mathrm{mod}\ 8)$ be
a prime for integers $x$, $y$, $a$, $b$ with $x\equiv a\equiv1$
$(\mathrm{mod}\ 4)$.
\item As $x^{2}\equiv a^{2}\equiv1\equiv p$ $(\mathrm{mod}\ 8)$, we have
$y\equiv b\equiv0$ $(\mathrm{mod}\ 2)$.
\item As $p\equiv x^{2}$ $(\mathrm{mod}\ 16)$, we have $p\equiv1$ $(\mathrm{mod}\ 16)$
$\Leftrightarrow x\equiv1$ $(\mathrm{mod}\ 8)$; $p\equiv9$ $(\mathrm{mod}\ 16)$
$\Leftrightarrow x\equiv5$ $(\mathrm{mod}\ 8)$.
\end{itemize}
Gauss proved in 1828 that (see his Werke Vol II \cite{GaussWerkeII}):
\begin{itemize}
\item $\left(\frac{2}{p}\right)_{4}=1$ $\Leftrightarrow$ $y\equiv0$ $(\mathrm{mod}\ 4)$
$\Leftrightarrow$ $a\equiv1$ $(\mathrm{mod}\ 8)$; $\left(\frac{2}{p}\right)_{4}\neq1$
$\Leftrightarrow$ $y\equiv2$ $(\mathrm{mod}\ 4)$ $\Leftrightarrow$
$a\equiv5$ $(\mathrm{mod}\ 8)$.
\end{itemize}
As $p\equiv1$ $(\mathrm{mod}\ 8)$, $2$ is a quadratic residue modulo
$p$. Let $2\equiv g^{2u}$ $(\mathrm{mod}\ p)$ with $u\in\mathbb{Z}$.
As 
\[
S_{2}(\beta^{g^{i}})^{2}=\sum_{t=0}^{\frac{p-1}{2}-1}\beta^{2g^{i+2t}}=\sum_{t=0}^{\frac{p-1}{2}-1}\beta^{g^{i+2(t+u)}}=S_{2}(\beta^{g^{i}}),
\]
$S_{2}(\beta^{g^{i}})\in\{0,1\}$ for $i\in\{0,1\}$. Moreover, $S_{2}(\beta)+S_{2}(\beta^{g})=1$.
So $(S_{2}(\beta),S_{2}(\beta^{g}))=(1,0)$ or $(0,1)$. As
\[
16(2,0)_{4}=16(2,2)_{4}=p-3+2x,\qquad16(2,1)_{4}=16(2,3)_{4}=p+1-2x,
\]
(see \cite[\S 11]{Dickson1935cyclotomy}), by Lemma \ref{lem:Sd},
we can solve $\overrightarrow{S_{4}}(\beta)$ as follows.
\begin{lem}
[{\cite[\S 3.1]{Edemskii2010linear}}]\label{lem:S4}Up to transformations
$\sigma_{h}$ (by a good choice of $\alpha$ as in Lemma \ref{lem:choice}),
we have 
\[
\overrightarrow{S_{4}}(\beta)=\begin{cases}
(1,0,0,0) & \text{if }p\equiv1\ (\mathrm{mod}\ 16)\text{ and }\left(\frac{2}{p}\right)_{4}=1,\\
(0,1,1,1) & \text{if }p\equiv9\ (\mathrm{mod}\ 16)\text{ and }\left(\frac{2}{p}\right)_{4}=1,\\
(\mu,1,\mu+1,1) & \text{if }p\equiv1\ (\mathrm{mod}\ 16)\text{ and }\left(\frac{2}{p}\right)_{4}\neq1,\\
(\mu,0,\mu+1,0) & \text{if }p\equiv9\ (\mathrm{mod}\ 16)\text{ and }\left(\frac{2}{p}\right)_{4}\neq1.
\end{cases}
\]
\end{lem}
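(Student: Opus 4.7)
The plan is to lift the $2$-entry vector $\overrightarrow{S_2}(\beta)$ to the $4$-entry $\overrightarrow{S_4}(\beta)$ via Lemma \ref{lem:Sd} with $d=4$, reducing the task to solving two quadratic equations in $\mathrm{GF}(2^m)$ whose coefficients are the Gaussian cyclotomic numbers $(2,0)_4$ and $(2,3)_4$ reduced modulo $2$. The setup for $\overrightarrow{S_2}(\beta)$ is already in the excerpt above the lemma: since $p\equiv 1\pmod 8$, $2$ is a quadratic residue mod $p$, so $S_2(\beta^{g^i})^2=S_2(\beta^{g^i})$ forces $S_2(\beta^{g^i})\in\{0,1\}$, and $S_2(\beta)+S_2(\beta^g)=1$ leaves only the possibilities $(1,0)$ and $(0,1)$. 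Applying $\sigma_h$ from Lemma \ref{lem:choice}, I would normalize $\overrightarrow{S_2}(\beta)=(1,0)$ once and for all; this pins $h\bmod 2$ but leaves a residual $\sigma_2$-symmetry acting on $\overrightarrow{S_4}(\beta)$, which accounts for the remaining ``up to $\sigma_h$'' freedom in the statement.

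Invoking Lemma \ref{lem:Sd} with $d=4$, for $i\in\{0,1\}$ I obtain
\begin{align*}
S_4(\beta^{g^i})+S_4(\beta^{g^{i+2}}) &= S_2(\beta^{g^i}),\\
S_4(\beta^{g^i})\,S_4(\beta^{g^{i+2}}) &= (2,-i)_4\,S_2(\beta)+(2,1-i)_4\,S_2(\beta^g),
\end{align*}
where the correction term $\tfrac{p-1}{4}\cdot\tfrac{1-(-1)^{(p-1)/4}}{2}$ vanishes because $p\equiv 1\pmod 8$ makes $(p-1)/4$ even. Substituting the normalized $\overrightarrow{S_2}(\beta)=(1,0)$ turns this into two Vieta systems: the pair $(S_4(\beta),S_4(\beta^{g^2}))$ has sum $1$ and product $(2,0)_4\bmod 2$, while the pair $(S_4(\beta^g),S_4(\beta^{g^3}))$ has sum $0$ and product $(2,-1)_4\bmod 2=(2,3)_4\bmod 2$.

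The third step is to compute these two parities using $16(2,0)_4=p-3+2x$ and $16(2,3)_4=p+1-2x$ together with $p=x^2+4y^2$ and $x\equiv 1\pmod 4$. The datum $p\bmod 16$ fixes $x\bmod 8$ (via $p\equiv x^2\pmod{16}$), and then $4y^2=p-x^2$ combined with Gauss's criterion $\left(\frac{2}{p}\right)_4=1\Leftrightarrow y\equiv 0\pmod 4$ pins down the parity of each of $(p-3+2x)/16$ and $(p+1-2x)/16$; a routine case check yields the four parity pairs $(0,0),(0,1),(1,1),(1,0)$ corresponding to the four rows of the lemma. The final step is to read off the quadratic solutions in $\mathrm{GF}(2^m)$: sum $1$ and product $0$ gives $\{0,1\}$; sum $0$ and product $0$ gives $\{0,0\}$; sum $0$ and product $1$ gives the double root $1$ of $(t+1)^2=0$, i.e.\ $\{1,1\}$; and sum $1$ with product $1$ produces the roots $\{\mu,\mu+1\}\subset\mathrm{GF}(4)\setminus\mathrm{GF}(2)$ of $t^2+t+1$. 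Assembling the two pairs and using the residual $\sigma_2$-symmetry to fix an ordering reproduces each of the four vectors in the statement. The main obstacle is the parity bookkeeping in the third step: tracking $p\bmod 16$, $x\bmod 8$, and $y\bmod 4$ simultaneously through the Gauss biquadratic criterion is delicate, and each of the four cases must be verified to align with the correct row.
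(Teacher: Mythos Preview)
Your proposal is correct and follows essentially the same route the paper sketches immediately before the lemma: normalize $\overrightarrow{S_2}(\beta)$, apply Lemma~\ref{lem:Sd} with $d=4$ (the correction term vanishing since $(p-1)/4$ is even), and solve the two resulting Vieta systems using the parities of $(2,0)_4=(p-3+2x)/16$ and $(2,3)_4=(p+1-2x)/16$. The paper does not write out the four parity checks but leaves them implicit in its citation; your case-by-case verification of $((2,0)_4\bmod 2,(2,3)_4\bmod 2)=(0,0),(0,1),(1,1),(1,0)$ and the corresponding quadratic solutions in $\mathrm{GF}(4)$ is exactly what is needed to complete it.
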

\begin{itemize}
\item Here $\mu$ is a root of the equation $\mu^{2}+\mu+1=0$ in $\mathrm{GF}(2^{2})=\{0,1,\mu,\mu+1\}$.
\end{itemize}
From \cite[Appendix]{Lehmer1955number} we read off Gaussian classical
cyclotomic numbers $(4,j)_{8}$ for $0\leqslant j\leqslant3$ as in
Table \ref{tab:CyclNum}. This table uses a particular choice for
the sign of $y=\pm\sqrt{\frac{p-x^{2}}{4}}$ (determined by the choice
of the primitive root $g$ modulo $p$), which makes no difference
for our result of $\overrightarrow{A_{8}}(\beta)$.
\begin{table}[h]
\protect\caption{\label{tab:CyclNum}$64(4,j)_{8}$ for $0\leqslant j\leqslant3$}

\centering{}%
\begin{tabular}{c|c|c|c|c}
\hline 
\multirow{2}{*}{} & \multicolumn{2}{c|}{$\left(\frac{2}{p}\right)_{4}=1$} & \multicolumn{2}{c}{$\left(\frac{2}{p}\right)_{4}\neq1$}\tabularnewline
\cline{2-5} 
 & $p\equiv1\ (\mathrm{mod}\ 16)$ & $p\equiv9\ (\mathrm{mod}\ 16)$ & $p\equiv1\ (\mathrm{mod}\ 16)$ & $p\equiv9\ (\mathrm{mod}\ 16)$\tabularnewline
\hline 
\hline 
$64(4,0)_{8}$ & $p-7-2x+8a$ & $p-15-2x$ & $p-7-10x$ & $p-15-10x-8a$\tabularnewline
\hline 
$64(4,1)_{8}$ & $p+1+2x-4a$ & $p-7+2x+4a$ & $p+1+2x-4a+16y$ & $p-7+2x+4a+16y$\tabularnewline
\hline 
$64(4,2)_{8}$ & $p+1-2x$ & $p-7-2x-8a$ & $p+1+6x+8a$ & $p-7+6x$\tabularnewline
\hline 
$64(4,3)_{8}$ & $p+1+2x-4a$ & $p-7+2x+4a$ & $p+1+2x-4a-16y$ & $p-7+2x+4a-16y$\tabularnewline
\hline 
\end{tabular}
\end{table}

From values of $S_{4}(\beta^{g^{j}})$ and $(4,j)_{8}$ for $0\leqslant j\leqslant3$
as above, Lemma \ref{lem:Sd} gives the values of $S_{8}(\beta^{i})+S_{8}(\beta^{i+4})$
and $S_{8}(\beta^{i})S_{8}(\beta^{i+4})$ for $0\leqslant i\leqslant3$
in $\mathrm{GF}(4)=\{0,1,\mu,\mu+1\}$. So the sets $\{S_{8}(\beta^{i}),S_{8}(\beta^{i+4})\}$
lie in Table \ref{tab:XY}, a list the sets $\{X,Y\}$ with values
of $X+Y$ and $XY$ given in $\mathrm{GF}(4)$.

\begin{table}[h]
\protect\caption{\label{tab:XY}$\{X,Y\}$ with $X+Y$ and $XY$ given in $\mathrm{GF}(4)$}

\centering{}%
\begin{tabular}{c|c|c|c|c}
\hline 
\diagbox{$X+Y$}{$XY$} & $0$ & $1$ & $\mu$ & $\mu+1$\tabularnewline
\hline 
$0$ & $\{0\}$ & $\{1\}$ & $\{\mu+1\}$ & $\{\mu\}$\tabularnewline
\hline 
$1$ & $\{0,1\}$ & $\{\mu,\mu+1\}$ & $\{\eta,\eta+1\}$ & $\{\eta^{2},\eta^{2}+1\}$\tabularnewline
\hline 
\multirow{2}{*}{$\mu$} & \multirow{2}{*}{$\{0,\mu\}$} & \multirow{2}{*}{$\{\eta^{3}+\eta,\eta^{3}+\eta^{2}\}$} & $\{\eta^{3}+\eta+1,$ & \multirow{2}{*}{$\{1,\mu+1\}$}\tabularnewline
 &  &  & $\eta^{3}+\eta^{2}+1\}$ & \tabularnewline
\hline 
$\mu+1$ & $\{0,\mu+1\}$ & $\{\eta^{3},\eta^{3}+\eta^{2}+\eta+1\}$ & $\{1,\mu\}$ & $\{\eta^{3}+1,\eta^{3}+\eta^{2}+\eta\}$\tabularnewline
\hline 
\end{tabular}
\end{table}

\begin{itemize}
\item Here $\eta$ is a root of the equation $\eta^{2}+\eta=\mu$ in $\mathrm{GF}(2^{4})$.
Note that $\eta$ is also a root of the equation $\eta^{4}+\eta+1=0$,
and $\mathrm{GF}(2^{4})=\{\sum_{i=0}^{3}\lambda_{i}\eta^{i}:\lambda_{i}=0,1\}$.
\end{itemize}
Now we calculate $\overrightarrow{A_{8}}(\beta)$ in four cases according
to $[p$ $\mathrm{mod}\ 16]$ and $\left(\frac{2}{p}\right)_{4}$.

\subsection{Case: $p\equiv1$ $(\mathrm{mod}\ 16)$ and $\left(\frac{2}{p}\right)_{4}=1$}
\begin{itemize}
\item $p\equiv1$ $(\mathrm{mod}\ 16)$ $\Leftrightarrow$ $x\equiv1$ $(\mathrm{mod}\ 8)$. 
\item $\left(\frac{2}{p}\right)_{4}=1$ $\Leftrightarrow$ $y\equiv0$ $(\mathrm{mod}\ 4)$
$\Leftrightarrow$ $a\equiv1$ $(\mathrm{mod}\ 8)$.
\item $a^{2}\equiv1$ $(\mathrm{mod}\ 16)$ $\Rightarrow2b^{2}\equiv0$
$(\mathrm{mod}\ 16)$ $\Rightarrow b\equiv0$ $(\mathrm{mod}\ 4)$.
\end{itemize}
Let $x=8x_{1}+1$, $y=4y_{1}$, $a=8a_{1}+1$, and $b=4b_{1}$, with
integers $x_{1}$, $y_{1}$, $a_{1}$ and $b_{1}$. Note that $p=(8x_{1}+1)^{2}+4(4y_{1})^{2}=(8a_{1}+1)^{2}+2(4b_{1})^{2}$
$\Rightarrow$ $16(x_{1}-a_{1})\equiv32b_{1}^{2}$ $(\mathrm{mod}\ 64)$
$\Rightarrow$ $(x_{1}-a_{1})\equiv2b_{1}^{2}$ $(\mathrm{mod}\ 4)$.
So $x_{1}\equiv a_{1}$ $(\mathrm{mod}\ 2)$. Therefore,
\[
(4,0)_{8}=\frac{1}{64}(p-7-2x+8a)=x_{1}^{2}+y_{1}^{2}+a_{1}\equiv y_{1}\ (\mathrm{mod}\ 2).
\]

By a good choice of $\alpha$, we may assume
\[
\overrightarrow{S_{4}}(\beta)=(S_{4}(\beta),S_{4}(\beta^{g}),S_{4}(\beta^{g^{2}}),S_{4}(\beta^{g^{3}}))=(1,0,0,0).
\]

\begin{itemize}
\item Note that $S_{8}(\beta)+S_{8}(\beta^{g^{4}})=1$, and $S_{8}(\beta)S_{8}(\beta^{g^{4}})=(4,0)_{8}=y_{1}$.
So
\[
\{S_{8}(\beta),S_{8}(\beta^{g^{4}})\}=\{y_{1}\mu,y_{1}\mu+1\}.
\]

\item For $i\in\{1,3\}$, $S_{8}(\beta^{g^{i}})+S_{8}(\beta^{g^{i+4}})=0$,
and $S_{8}(\beta^{g^{i}})S_{8}(\beta^{g^{i+4}})=(4,1)_{8}=(4,3)_{8}$.
So
\[
S_{8}(\beta^{g})=S_{8}(\beta^{g^{3}})=S_{8}(\beta^{g^{5}})=S_{8}(\beta^{g^{7}}).
\]

\item Note that $S_{8}(\beta^{g^{2}})+S_{8}(\beta^{g^{6}})=0$. So
\[
S_{8}(\beta^{g^{2}})=S_{8}(\beta^{g^{6}})=S_{8}(\beta^{g^{2}})S_{8}(\beta^{g^{6}})=(4,2)_{8}\in\{0,1\}.
\]

\end{itemize}
Let $c_{0}=\sum_{t=4}^{7}S_{8}(\beta^{g^{t}}).$ Always we have $c_{0}=y_{1}\mu$
or $y_{1}\mu+1$. By Corollary \ref{cor:Ad},
\[
\overrightarrow{A_{8}}(\beta)=(c_{0},c_{0},\dots,c_{0})+(1,1,1,1,0,0,0,0).
\]
Therefore, up to transformations $\sigma_{h}$, we have 
\[
\overrightarrow{A_{8}}(\beta)=\begin{cases}
(0,0,0,0,1,1,1,1), & \text{if }p\equiv1\ (\mathrm{mod}\ 16)\text{ and }y\equiv0\ (\mathrm{mod}\ 8),\\
(\mu,\mu,\mu,\mu,\mu+1,\mu+1,\mu+1,\mu+1), & \text{if }p\equiv1\ (\mathrm{mod}\ 16)\text{ and }y\equiv4\ (\mathrm{mod}\ 8).
\end{cases}
\]

\subsection{Case: $p\equiv9$ $(\mathrm{mod}\ 16)$ and $\left(\frac{2}{p}\right)_{4}=1$}
\begin{itemize}
\item $p\equiv9$ $(\mathrm{mod}\ 16)$ $\Leftrightarrow$ $x\equiv5$ $(\mathrm{mod}\ 8)$.
\item $\left(\frac{2}{p}\right)_{4}=1$ $\Leftrightarrow$ $y\equiv0$ $(\mathrm{mod}\ 4)$
$\Leftrightarrow$ $a\equiv1$ $(\mathrm{mod}\ 8)$.
\item $a^{2}\equiv1$ $(\mathrm{mod}\ 16)$ $\Rightarrow2b^{2}\equiv8$
$(\mathrm{mod}\ 16)$ $\Rightarrow b\equiv2$ $(\mathrm{mod}\ 4)$.
\end{itemize}
Let $x=8x_{1}+5$, $y=4y_{1}$, $a=8a_{1}+1$, and $b=4b_{1}+2$,
with integers $x_{1}$, $y_{1}$, $a_{1}$ and $b_{1}$. Note that
$p=(8x_{1}+5)^{2}+4(4y_{1})^{2}=(8a_{1}+1)^{2}+2(4b_{1}+2)^{2}$ $\Rightarrow$
$16x_{1}+25\equiv16a_{1}+32b_{1}(b_{1}+1)+9$ $(\mathrm{mod}\ 64)$
$\Rightarrow$ $a_{1}\equiv x_{1}+1$ $(\mathrm{mod}\ 4)$.

Note that $(4,1)_{8}-(4,2)_{8}=4x+12a=(32x_{1}+96a_{1}+32)/64\equiv(32+96)a_{1}/64\equiv0$
$(\mathrm{mod}\ 2)$, and
\begin{align*}
(4,1)_{8} & =(4,3)_{8}\equiv(4,2)_{8}\ (\mathrm{mod}\ 2),\\
(4,1)_{8} & =\frac{1}{64}(p-7+2x+4a)=x_{1}^{2}+y_{1}^{2}+\frac{3}{2}x_{1}+\frac{1}{2}a_{1}+\frac{1}{2}\equiv x_{1}+y_{1}+1\ (\mathrm{mod}\ 2),\\
(4,0)_{8} & =\frac{1}{64}(p-15-2x)=x_{1}^{2}+y_{1}^{2}+x_{1}\equiv y_{1}\ (\mathrm{mod}\ 2).
\end{align*}

By a good choice of $\alpha$, we may assume
\[
\overrightarrow{S_{4}}(\beta)=(S_{4}(\beta),S_{4}(\beta^{g}),S_{4}(\beta^{g^{2}}),S_{4}(\beta^{g^{3}}))=(0,1,1,1).
\]

\begin{itemize}
\item Note that $S_{8}(\beta)+S_{8}(\beta^{g^{4}})=0$, and
\[
S_{8}(\beta)=S_{8}(\beta^{g^{4}})=S_{8}(\beta)S_{8}(\beta^{g^{4}})=(4,1)_{8}+(4,2)_{8}+(4,3)_{8}+1=x_{1}+y_{1}\in\{0,1\}.
\]

\item For $i\in\{1,2,3\}$, $S_{8}(\beta^{g^{i}})+S_{8}(\beta^{g^{i+4}})=1$,
and
\begin{align*}
S_{8}(\beta^{g^{i}})S_{8}(\beta^{g^{i+4}}) & =(4,0)_{8}+2(4,1)_{8}+1=y_{1}+1,\\
\{S_{8}(\beta^{g^{i}}),S_{8}(\beta^{g^{i+4}})\} & =\{(y_{1}+1)\mu,(y_{1}+1)\mu+1\}.
\end{align*}

\end{itemize}
Let $c_{0}=\sum_{t=4}^{7}S_{8}(\beta^{g^{t}})$. Always we have $c_{0}=(y_{1}+1)\mu$
or $(y_{1}+1)\mu+1$. By Corollary \ref{cor:Ad},
\[
\overrightarrow{A_{8}}(\beta)=(c_{0},c_{0},\dots,c_{0})+(0,1,0,1,1,0,1,0).
\]
Therefore, up to transformations $\sigma_{h}$, we have 
\[
\overrightarrow{A_{8}}(\beta)=\begin{cases}
(0,1,0,1,1,0,1,0), & \text{if }p\equiv9\ (\mathrm{mod}\ 16)\text{ and }y\equiv4\ (\mathrm{mod}\ 8),\\
(\mu,\mu+1,\mu,\mu+1,\mu+1,\mu,\mu+1,\mu), & \text{if }p\equiv9\ (\mathrm{mod}\ 16)\text{ and }y\equiv0\ (\mathrm{mod}\ 8).
\end{cases}
\]

\subsection{Case: $p\equiv1$ $(\mathrm{mod}\ 16)$ and $\left(\frac{2}{p}\right)_{4}\protect\neq1$}
\begin{itemize}
\item $p\equiv1$ $(\mathrm{mod}\ 16)$ $\Leftrightarrow$ $x\equiv1$ $(\mathrm{mod}\ 8)$.
\item $\left(\frac{2}{p}\right)_{4}\neq1$ $\Leftrightarrow$ $y\equiv2$
$(\mathrm{mod}\ 4)$ $\Leftrightarrow$ $a\equiv5$ $(\mathrm{mod}\ 8)$.
\item $a^{2}\equiv9$ $(\mathrm{mod}\ 16)$ $\Rightarrow2b^{2}\equiv8$
$(\mathrm{mod}\ 16)$ $\Rightarrow b\equiv2$ $(\mathrm{mod}\ 4)$.
\end{itemize}
Let $x=8x_{1}+1$, $y=4y_{1}+2$, $a=8a_{1}+5$ and $b=4b_{1}+2$,
with integers $x_{1}$, $y_{1}$, $a_{1}$ and $b_{1}$. Note that
$p=(8x_{1}+1)^{2}+4(4y_{1}+2)^{2}=(8a_{1}+5)^{2}+2(4b_{1}+2)^{2}$
$\Rightarrow x_{1}-a_{1}\equiv1$ $(\mathrm{mod}\ 4)$. Also note
that
\begin{align*}
(4,0)_{8} & =\frac{1}{64}(p-7-10x)=x_{1}^{2}-x_{1}+y_{1}^{2}+y_{1}\equiv0\ (\mathrm{mod}\ 2),\\
(4,1)_{8} & =\frac{1}{64}(p+1+2x-4a+16y)=x_{1}^{2}+y_{1}^{2}+2y_{1}+\frac{1}{2}(1+x_{1}-a_{1})\equiv x_{1}+y_{1}+1\ (\mathrm{mod}\ 2),\\
(4,2)_{8} & =\frac{1}{64}(p+1+6x+8a)=x_{1}^{2}+y_{1}^{2}+x_{1}+y_{1}+a_{1}+1\equiv x_{1}\ (\mathrm{mod}\ 2),\\
(4,3)_{8} & =(4,1)_{8}-\frac{1}{2}y\equiv x_{1}+y_{1}+1-(2y_{1}+1)\equiv x_{1}+y_{1}\ (\mathrm{mod}\ 2).
\end{align*}

By a good choice of $\alpha$, we may assume
\[
\overrightarrow{S_{4}}(\beta)=(S_{4}(\beta),S_{4}(\beta^{g}),S_{4}(\beta^{g^{2}}),S_{4}(\beta^{g^{3}}))=(\mu,1,\mu+1,1).
\]

\begin{itemize}
\item Note that $S_{8}(\beta)+S_{8}(\beta^{g^{4}})=\mu$, and
\begin{align*}
S_{8}(\beta)S_{8}(\beta^{g^{4}}) & =((4,0)_{8}+(4,2)_{8})\mu+(4,1)_{8}+(4,2)_{8}+(4,3)_{8}=x_{1}\mu+x_{1}+1,\\
\{S_{8}(\beta),S_{8}(\beta^{g^{4}})\} & =\begin{cases}
\{\eta^{3}+\eta,\eta^{3}+\eta^{2}\}, & \text{if }x_{1}\equiv0\ (\mathrm{mod}\ 2),\\
\{\eta^{3}+\eta+1,\eta^{3}+\eta^{2}+1\}, & \text{if }x_{1}\equiv1\ (\mathrm{mod}\ 2).
\end{cases}
\end{align*}

\item Note that $S_{8}(\beta^{g^{2}})+S_{8}(\beta^{g^{6}})=\mu+1$, and
\begin{align*}
S_{8}(\beta^{g^{2}})S_{8}(\beta^{g^{6}}) & =((4,2)_{8}+(4,0)_{8})\mu+(4,3)_{8}+(4,0)_{8}+(4,1)_{8}=x_{1}\mu+1,\\
\{S_{8}(\beta^{g^{2}}),S_{8}(\beta^{g^{6}})\} & =\begin{cases}
\{\eta^{3},\eta^{3}+\eta^{2}+\eta+1\}, & \text{if }x_{1}\equiv0\ (\mathrm{mod}\ 2),\\
\{\eta^{3}+1,\eta^{3}+\eta^{2}+\eta\}, & \text{if }x_{1}\equiv1\ (\mathrm{mod}\ 2),
\end{cases}
\end{align*}

\item Note that $S_{8}(\beta^{g})+S_{8}(\beta^{g^{5}})=1$, and
\begin{align*}
S_{8}(\beta^{g})S_{8}(\beta^{g^{5}}) & =((4,3)_{8}+(4,1)_{8})\mu+(4,0)_{8}+(4,1)_{8}+(4,2)_{8}=\mu+(y_{1}+1),\\
\{S_{8}(\beta^{g}),S_{8}(\beta^{g^{5}})\} & =\begin{cases}
\{\eta^{2},\eta^{2}+1\}, & \text{if }y_{1}\equiv0\ (\mathrm{mod}\ 2),\\
\{\eta,\eta+1\}, & \text{if }y_{1}\equiv1\ (\mathrm{mod}\ 2),
\end{cases}
\end{align*}

\item Note that $S_{8}(\beta^{g^{3}})+S_{8}(\beta^{g^{7}})=1$, and
\begin{align*}
S_{8}(\beta^{g^{3}})S_{8}(\beta^{g^{7}}) & =((4,1)_{8}+(4,3)_{8})\mu+(4,2)_{8}+(4,3)_{8}+(4,0)_{8}=\mu+y_{1},\\
\{S_{8}(\beta^{g^{3}}),S_{8}(\beta^{g^{7}})\} & =\begin{cases}
\{\eta,\eta+1\}, & \text{if }y_{1}\equiv0\ (\mathrm{mod}\ 2),\\
\{\eta^{2},\eta^{2}+1\}, & \text{if }y_{1}\equiv1\ (\mathrm{mod}\ 2),
\end{cases}
\end{align*}

\end{itemize}
Note that $S_{8}(\beta^{g^{4}})+S_{8}(\beta^{g^{6}})\in\{\eta,\eta+1,\eta^{2},\eta^{2}+1\}$,
and $S_{8}(\beta^{g^{5}})+S_{8}(\beta^{g^{7}})\in\{\mu,\mu+1\}$.
Let $c_{0}=\sum_{t=4}^{7}S_{8}(\beta^{g^{t}})$. Always we have $c_{0}\in\{\eta,\eta+1,\eta^{2},\eta^{2}+1\}$.
By Corollary \ref{cor:Ad},
\[
\overrightarrow{A_{8}}(\beta)=(c_{0},c_{0},\dots,c_{0})+(\mu,\mu+1,0,1,\mu+1,\mu,1,0).
\]
Therefore, when $p\equiv1$ $(\mathrm{mod}\ 16)$ and $\left(\frac{2}{p}\right)_{4}\neq1$,
up to transformations $\sigma_{h}$, we have 
\begin{align*}
\overrightarrow{A_{8}}(\beta)= & \ (\eta,\eta^{2},\eta^{2}+1,\eta,\eta+1,\eta^{2}+1,\eta^{2},\eta+1)\\
\text{or} & \ (\eta^{2},\eta,\eta+1,\eta^{2},\eta^{2}+1,\eta+1,\eta,\eta^{2}+1).
\end{align*}

\subsection{Case: $p\equiv9$ $(\mathrm{mod}\ 16)$ and $\left(\frac{2}{p}\right)_{4}\protect\neq1$}
\begin{itemize}
\item $p\equiv9$ $(\mathrm{mod}\ 16)$ $\Leftrightarrow$ $x\equiv5$ $(\mathrm{mod}\ 8)$.
\item $\left(\frac{2}{p}\right)_{4}\neq1$ $\Leftrightarrow$ $y\equiv2$
$(\mathrm{mod}\ 4)$ $\Leftrightarrow$ $a\equiv5$ $(\mathrm{mod}\ 8)$.
\item $a^{2}\equiv9$ $(\mathrm{mod}\ 16)$ $\Rightarrow2b^{2}\equiv0$
$(\mathrm{mod}\ 16)$ $\Rightarrow b\equiv0$ $(\mathrm{mod}\ 4)$. 
\end{itemize}
Let $x=8x_{1}+5$, $y=4y_{1}+2$, $a=8a_{1}+5$ and $b=4b_{1}$, with
integers $x_{1}$, $y_{1}$, $a_{1}$ and $b_{1}$. Note that $p=(8x_{1}+5)^{2}+4(4y_{1}+2)^{2}=(8a_{1}+5)^{2}+2(4b_{1})^{2}$
$\Rightarrow x_{1}-a_{1}+1\equiv2b_{2}^{2}$ $(\mathrm{mod}\ 4)$.
Also note that
\begin{align*}
(4,0)_{8} & =\frac{1}{64}(p-15-10x-8a)=x_{1}^{2}+y_{1}^{2}+y_{1}-a_{1}-1\equiv0\ (\mathrm{mod}\ 2),\\
(4,1)_{8} & =\frac{1}{64}(p-7+2x+4a+16y)=x_{1}^{2}+y_{1}^{2}+\frac{3}{2}(x_{1}+1)+2y_{1}+\frac{1}{2}a_{1}\equiv x_{1}+y_{1}+b_{1}\ (\mathrm{mod}\ 2),\\
(4,2)_{8} & =\frac{1}{64}(p-7+6x)=x_{1}^{2}+y_{1}^{2}+2x_{1}+y_{1}+1\equiv x_{1}+1\ (\mathrm{mod}\ 2),\\
(4,3)_{8} & =(4,1)_{8}-\frac{1}{2}(4y_{1}+2)\equiv x_{1}+y_{1}+b_{1}+1\ (\mathrm{mod}\ 2).
\end{align*}

By a good choice of $\alpha$, we may assume
\[
\overrightarrow{S_{4}}(\beta)=(S_{4}(\beta),S_{4}(\beta^{g}),S_{4}(\beta^{g^{2}}),S_{4}(\beta^{g^{3}}))=(\mu,0,\mu+1,0).
\]

\begin{itemize}
\item Note that $S_{8}(\beta)+S_{8}(\beta^{g^{4}})=\mu$, and
\begin{align*}
S_{8}(\beta)S_{8}(\beta^{g^{4}}) & =(4,0)_{8}\mu+(4,2)_{8}(\mu+1)+1=(x_{1}+1)\mu+x_{1},\\
\{S_{8}(\beta),S_{8}(\beta^{g^{4}})\} & =\begin{cases}
\{\eta^{3}+\eta+1,\eta^{3}+\eta^{2}+1\}, & \text{if }x_{1}\equiv0\ (\mathrm{mod}\ 2),\\
\{\eta^{3}+\eta,\eta^{3}+\eta^{2}\}, & \text{if }x_{1}\equiv1\ (\mathrm{mod}\ 2).
\end{cases}
\end{align*}

\item Note that $S_{8}(\beta^{g^{2}})+S_{8}(\beta^{g^{6}})=\mu+1$, and
\begin{align*}
S_{8}(\beta^{g^{2}})S_{8}(\beta^{g^{6}}) & =(4,2)_{8}\mu+(4,0)_{8}(\mu+1)+1=(x_{1}+1)\mu+1,\\
\{S_{8}(\beta^{g^{2}}),S_{8}(\beta^{g^{6}})\} & =\begin{cases}
\{\eta^{3}+1,\eta^{3}+\eta^{2}+\eta\}, & \text{if }x_{1}\equiv0\ (\mathrm{mod}\ 2),\\
\{\eta^{3},\eta^{3}+\eta^{2}+\eta+1\}, & \text{if }x_{1}\equiv1\ (\mathrm{mod}\ 2).
\end{cases}
\end{align*}

\item Note that $S_{8}(\beta^{g})+S_{8}(\beta^{g^{5}})=0$, so
\[
S_{8}(\beta^{g})=S_{8}(\beta^{g^{5}})=S_{8}(\beta^{g})S_{8}(\beta^{g^{5}})=(4,3)_{8}\mu+(4,1)_{8}(\mu+1)+1=\mu+x_{1}+y_{1}+b_{1}+1.
\]

\item Note that $S_{8}(\beta^{g^{3}})+S_{8}(\beta^{g^{7}})=0$, so
\[
S_{8}(\beta^{g^{3}})=S_{8}(\beta^{g^{7}})=S_{8}(\beta^{g^{3}})S_{8}(\beta^{g^{7}})=(4,1)_{8}\mu+(4,3)_{8}(\mu+1)+1=\mu+x_{1}+y_{1}+b_{1}.
\]

\end{itemize}
Note that $S_{8}(\beta^{g^{4}})+S_{8}(\beta^{g^{6}})\in\{\eta,\eta+1,\eta^{2},\eta^{2}+1\}$,
and $S_{8}(\beta^{g^{5}})+S_{8}(\beta^{g^{7}})=1$. Let $c_{0}=\sum_{t=4}^{7}S_{8}(\beta^{g^{t}})$.
Always we have $c_{0}\in\{\eta,\eta+1,\eta^{2},\eta^{2}+1\}$. By
Corollary \ref{cor:Ad},
\[
\overrightarrow{A_{8}}(\beta)=(c_{0},c_{0},\dots,c_{0})+(\mu,\mu,1,1,\mu+1,\mu+1,0,0).
\]
Therefore, when $p\equiv9$ $(\mathrm{mod}\ 16)$ and $\left(\frac{2}{p}\right)_{4}\neq1$,
up to transformations $\sigma_{h}$, we have
\begin{align*}
\overrightarrow{A_{8}}(\beta)= & \ (\eta,\eta,\eta^{2},\eta^{2},\eta+1,\eta+1,\eta^{2}+1,\eta^{2}+1)\\
\text{or} & \ (\eta^{2},\eta^{2},\eta,\eta,\eta^{2}+1,\eta^{2}+1,\eta+1,\eta+1).
\end{align*}
In view of the above four cases, we get the following criterion for
$\left(\frac{2}{p}\right)_{8}=1$.
\begin{prop}
Let $p=x^{2}+4y^{2}$ be an odd prime with $p\equiv1$ $(\mathrm{mod}\ 8)$
and integers $x$,$y$. Let $g$ be a primitive root modulo $p$,
$\beta$ be a primitive $p$-th roots of unity in an extension field
of $\mathrm{GF}(2)$, $S_{8}(\beta^{g^{i}})=\sum_{t=0}^{\frac{p-1}{8}-1}\beta^{g^{i+8t}}$
and $s_{i,8}=\sum_{v=4}^{7}S_{8}(\beta^{g^{i+v}})$ for $0\leqslant i\leqslant7$.
Then the following conditions are equivalent.

$(1)$$\left(\frac{2}{p}\right)_{8}=1$, i.e., $2$ is an octic residue
modulo $p$.

$(2)$ $S_{8}(\beta^{g^{i}})\in\{0,1\}$ for any $0\leqslant i\leqslant7$.

$(3)$ $s_{i,8}\in\{0,1\}$ for any $0\leqslant i\leqslant7$.

$(4)$ Either $p\equiv1$ $(\mathrm{mod}\ 16)$ and $y\equiv0$ $(\mathrm{mod}\ 8)$,
or $p\equiv9$ $(\mathrm{mod}\ 16)$ and $y\equiv4$ $(\mathrm{mod}\ 8)$.\end{prop}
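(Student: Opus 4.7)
My plan is to prove the four-way equivalence via the cycle $(1)\Rightarrow(2)\Rightarrow(3)\Rightarrow(4)\Rightarrow(2)\Rightarrow(1)$, combining a Frobenius-squaring argument with the explicit computations of $\overrightarrow{A_{8}}(\beta)$ carried out in Section \ref{sec:A8}. For the ``easy'' direction $(1)\Rightarrow(2)$, I would observe that if $2\equiv g^{8m}\ (\mathrm{mod}\ p)$, then
\[
S_{8}(\beta^{g^{i}})^{2}=S_{8}(\beta^{2g^{i}})=S_{8}(\beta^{g^{i+8m}})=S_{8}(\beta^{g^{i}}),
\]
placing each $S_{8}(\beta^{g^{i}})$ in $\mathrm{GF}(2)$. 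The implication $(2)\Rightarrow(3)$ is then immediate, since $s_{i,8}$ is a sum of four $S_{8}$-values, each in $\mathrm{GF}(2)$ by $(2)$.

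For $(3)\Rightarrow(4)$ I would simply read off the lists of $\overrightarrow{A_{8}}(\beta)$ computed in the four subsections of Section \ref{sec:A8}, recalling that the $i$-th entry of $\overrightarrow{A_{8}}(\beta)$ is precisely $s_{i,8}$. In Cases 3 and 4 (where $\left(\frac{2}{p}\right)_{4}\neq 1$), every entry lies in $\{\eta,\eta+1,\eta^{2},\eta^{2}+1\}\subseteq\mathrm{GF}(2^{4})\setminus\mathrm{GF}(2)$, so $(3)$ forces us into Case 1 or 2. In Case 1 the two possibilities $(0,0,0,0,1,1,1,1)$ and $(\mu,\mu,\mu,\mu,\mu+1,\mu+1,\mu+1,\mu+1)$ are distinguished by $y\ (\mathrm{mod}\ 8)$, and $(3)$ forces the first, i.e., $y\equiv 0\ (\mathrm{mod}\ 8)$; in Case 2 analogously $(3)$ forces $(0,1,0,1,1,0,1,0)$, i.e., $y\equiv 4\ (\mathrm{mod}\ 8)$. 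These are exactly the subcases of $(4)$. The reverse step $(4)\Rightarrow(2)$ follows by reading off from the same subcases of Section \ref{sec:A8} that each $S_{8}(\beta^{g^{i}})\in\{0,1\}$.

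To close the cycle with $(2)\Rightarrow(1)$, note that $p\equiv 1\ (\mathrm{mod}\ 8)$ makes $2$ a quadratic residue modulo $p$, so I can write $2\equiv g^{2m}\ (\mathrm{mod}\ p)$ for some integer $m$ and aim to conclude $m\equiv 0\ (\mathrm{mod}\ 4)$. Frobenius gives $S_{8}(\beta^{g^{i}})=S_{8}(\beta^{g^{i+2m}})$ for all $i$. If $m\equiv 1$ or $3\ (\mathrm{mod}\ 4)$, the index-shift $i\mapsto i+2m$ on $\mathbb{Z}/8$ has exactly two orbits, $\{0,2,4,6\}$ and $\{1,3,5,7\}$, so there are only two distinct values $c_{0},c_{1}\in\{0,1\}$ and
\[
\sum_{i=0}^{7}S_{8}(\beta^{g^{i}})=4c_{0}+4c_{1}=0\in\mathrm{GF}(2),
\]
contradicting $\sum_{i=0}^{7}S_{8}(\beta^{g^{i}})=\sum_{u\in\mathbb{Z}_{p}^{*}}\beta^{u}=1$. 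If $m\equiv 2\ (\mathrm{mod}\ 4)$, the orbits are $\{i,i+4\}$, so $S_{4}(\beta^{g^{i}})=S_{8}(\beta^{g^{i}})+S_{8}(\beta^{g^{i+4}})=0$ for $0\leqslant i\leqslant 3$, contradicting $\sum_{i=0}^{3}S_{4}(\beta^{g^{i}})=1$. Hence $m\equiv 0\ (\mathrm{mod}\ 4)$, which is $(1)$.

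The main obstacle I anticipate is the bookkeeping in the equivalence $(3)\Leftrightarrow(4)$: I must carefully trace the parity of $y_{1}$ through each subcase of Section \ref{sec:A8} and match it with the precise residue of $y\ (\mathrm{mod}\ 8)$ listed in $(4)$, keeping track of the different normalizations of $y$ across Cases 1 and 2. Once that matching is nailed down, the Frobenius parts $(1)\Leftrightarrow(2)$ reduce to short $\mathrm{GF}(2)$-arithmetic exercises using the sum identity $\sum_{i=0}^{d-1}S_{d}(\beta^{g^{i}})=1$ already quoted before Lemma \ref{lem:comp}.
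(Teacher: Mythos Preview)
Your argument is correct. The implications $(1)\Rightarrow(2)\Rightarrow(3)$ and the reading-off of $(3)\Leftrightarrow(4)$ from the case analysis in Section~\ref{sec:A8} match the paper exactly. The difference lies in how you close the cycle. The paper goes directly $(4)\Rightarrow(1)$: writing $2\equiv g^{l}\ (\mathrm{mod}\ p)$, it observes that squaring sends the vector $(s_{i,8})_{i=0}^{7}$ to its cyclic shift $\sigma_{l}(s_{i,8})_{i=0}^{7}$; since under $(4)$ this vector is one of $(0,0,0,0,1,1,1,1)$ or $(0,1,0,1,1,0,1,0)$, both of which admit no nontrivial cyclic shift fixing them, one concludes $l\equiv 0\ (\mathrm{mod}\ 8)$. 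You instead insert the extra step $(4)\Rightarrow(2)$ by inspecting the individual $S_{8}$-values in the relevant subcases (which does work, as in each subcase the parity of $y_{1}$ forces every $S_{8}(\beta^{g^{i}})$ into $\{0,1\}$), and then prove $(2)\Rightarrow(1)$ by an orbit-counting argument on the shift $i\mapsto i+2m$ modulo $8$, using the identity $\sum_{i}S_{8}(\beta^{g^{i}})=1$ (and, for the $2m\equiv 4$ case, $\sum_{i}S_{4}(\beta^{g^{i}})=1$). Your route has the advantage that the equivalence $(1)\Leftrightarrow(2)$ becomes a self-contained Frobenius/parity argument independent of the explicit vectors, while the paper's route is shorter once those vectors are in hand. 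Both are valid and the overall structure is the same.
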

\begin{proof}
$(1)\Rightarrow(2)$: As $\left(\frac{2}{p}\right)_{8}=1$, let $2\equiv g^{8u}$
$(\mathrm{mod}\ p)$ with $u\in\mathbb{Z}$. For $0\leqslant i\leqslant7$.
$S_{8}(\beta^{g^{i}})^{2}=S_{8}(\beta^{2g^{i}})=S_{8}(\beta^{g^{i+8u}})=S_{8}(\beta^{g^{i}})$,
so $S_{8}(\beta^{i})\in\{0,1\}$.

$(2)\Rightarrow(3)$ is obvious. $(3)\Leftrightarrow(4)$ by the above
expressions of $\overrightarrow{A_{8}}(\beta)=(s_{0,8},s_{1,8},\dots,s_{7,8})$
in four cases.

$(4)\Rightarrow(1)$: Let $2\equiv g^{l}$ $(\mathrm{mod}\ p)$ with
$l\in\mathbb{Z}$. Note that
\[
(s_{i,8}^{2})_{i=0}^{7}=\Bigl(\sum_{v=4}^{7}S_{8}(\beta^{2g^{i+v}})\Bigr)_{i=0}^{7}=\Bigl(\sum_{v=4}^{7}S_{8}(\beta^{g^{l+i+v}})\Bigr)_{i=0}^{7}=\sigma_{l}\Bigl(\sum_{v=4}^{7}S_{8}(\beta^{g^{i+v}})\Bigr)_{i=0}^{7}=\sigma_{l}(s_{i,8})_{i=0}^{7}.
\]
When $(4)$ holds, we may assume $(s_{i,8})_{i=0}^{7}=(0,0,0,0,1,1,1,1)$
or $(0,1,0,1,1,0,1,0)$, which should be invariant under the action
of $\sigma_{l}$. So $l\equiv0$ $(\mathrm{mod}\ 8)$, and thus $\left(\frac{2}{p}\right)_{8}=1$.
\end{proof}

\section{\label{sec:LCDH8}Linear complexity of DH-GCS$_{8}$}

This section is a proof of Theorem \ref{thm:main}. Let $s^{\infty}$
be the Ding-Helleseth generalized cyclotomic sequence DH-GCS$_{8}$,
with the matrix $\mathbb{S}=(s_{i,j})_{0\leqslant i,j\leqslant8}$.
Note that $s_{8,8}=0$ as $p\equiv q\equiv1$ $(\mathrm{mod}\ 8)$,
by Lemma \ref{lem:method}, 
\begin{align*}
L(s^{\infty}) & =pq-1-\frac{(p-1)(q-1)}{64}\cdot|\{0\leqslant i,j\leqslant7:s_{i,j}=0\}|\\
 & \quad-\frac{p-1}{8}\cdot|\{0\leqslant i\leqslant7:s_{i,8}=0\}|-\frac{q-1}{8}\cdot|\{0\leqslant j\leqslant7:s_{8,j}=0\}|,
\end{align*}
where $s_{i,j}=s_{i,8}+s_{8,j}+s_{8,j'}$ for $j'\in\{0,1,\dots,7\}$
with $j'\equiv j-\mathrm{ind}_{g}^{(q)}p$ $(\mathrm{mod}\ 8)$.

Section \ref{sec:A8} gives explicit expressions of $\overrightarrow{A_{8}}(\beta)=(s_{i,8})_{i=0}^{7}$.
Clearly, if we replace $p$ by $q$, and replace $\beta$ by $\gamma$,
then we get $\overrightarrow{B_{8}}(\gamma)=(s_{8,j})_{j=0}^{7}$
as in Table \ref{tab:B8}.
\begin{table}[h]
\protect\caption{\label{tab:B8} $\protect\overrightarrow{B_{8}}(\gamma)=(s_{8,j})_{j=0}^{7}$
up to transformations $\sigma_{h}$}

\centering{}%
\begin{tabular}{c|c|c}
\hline 
 & $q\ \mathrm{mod}\ 16$ & $\overrightarrow{B_{8}}(\gamma)$\tabularnewline
\hline 
\hline 
\multirow{2}{*}{$\left(\frac{2}{q}\right)_{8}=1$} & $1$ & $(0,0,0,0,1,1,1,1)$\tabularnewline
\cline{2-3} 
 & $9$ & $(0,1,0,1,1,0,1,0)$\tabularnewline
\hline 
\multirow{2}{*}{$\left(\frac{2}{q}\right)_{4}=1\neq\left(\frac{2}{q}\right)_{8}$} & $1$ & $(\mu,\mu,\mu,\mu,\mu+1,\mu+1,\mu+1,\mu+1)$\tabularnewline
\cline{2-3} 
 & $9$ & $(\mu,\mu+1,\mu,\mu+1,\mu+1,\mu,\mu+1,\mu)$\tabularnewline
\hline 
\multirow{4}{*}{$\left(\frac{2}{q}\right)_{4}\neq1$} & \multirow{2}{*}{$1$} & $(\eta,\eta^{2},\eta^{2}+1,\eta,\eta+1,\eta^{2}+1,\eta^{2},\eta+1)$
or\tabularnewline
 &  & $(\eta^{2},\eta,\eta+1,\eta^{2},\eta^{2}+1,\eta+1,\eta,\eta^{2}+1)$\tabularnewline
\cline{2-3} 
 & \multirow{2}{*}{$9$} & $(\eta,\eta,\eta^{2},\eta^{2},\eta+1,\eta+1,\eta^{2}+1,\eta^{2}+1)$
or\tabularnewline
 &  & $(\eta^{2},\eta^{2},\eta,\eta,\eta^{2}+1,\eta^{2}+1,\eta+1,\eta+1)$\tabularnewline
\hline 
\end{tabular}
\end{table}

Therefore, $\overrightarrow{B_{8}}(\gamma)+\sigma_{-\mathrm{ind}_{g}^{(q)}p}(\overrightarrow{B_{8}}(\gamma))=(s_{8,j}+s_{8,j'})_{j=0}^{7}$
are as in Table \ref{tab:BB}, with
\[
\mathrm{ind}_{g}^{(q)}p\equiv\begin{cases}
0 & (\mathrm{mod}\ 8),\ \text{if }\left(\frac{p}{q}\right)_{8}=1,\\
4 & (\mathrm{mod}\ 8),\ \text{if }\left(\frac{p}{q}\right)_{8}\neq1\text{ and }\left(\frac{p}{q}\right)_{4}=1,\\
\pm2 & (\mathrm{mod}\ 8),\ \text{if }\left(\frac{p}{q}\right)_{4}\neq1\text{ and }\left(\frac{p}{q}\right)_{2}=1,\\
\pm1\text{ or}\pm3 & (\mathrm{mod}\ 8),\ \text{if }\left(\frac{p}{q}\right)_{2}\neq1.
\end{cases}
\]
\begin{table}[h]
\protect\caption{\label{tab:BB}$\protect\overrightarrow{B_{8}}(\gamma)+\sigma_{-\mathrm{ind}_{g}^{(q)}p}(\protect\overrightarrow{B_{8}}(\gamma))=(s_{8,j}+s_{8,j'})_{j=0}^{7}$
up to transformations $\sigma_{h}$}

\centering{}%
\begin{tabular}{c|c|c}
\hline 
$ $ & \multicolumn{1}{c|}{$\left(\frac{2}{q}\right)_{4}=1$} & \multicolumn{1}{c}{$\left(\frac{2}{q}\right)_{4}\neq1$}\tabularnewline
\hline 
\hline 
$\mathrm{ind}_{g}^{(q)}p\equiv0$ $\mathrm{mod}\ 8$ & \multicolumn{2}{c}{$(0,0,0,0,0,0,0,0)$}\tabularnewline
\hline 
$\mathrm{ind}_{g}^{(q)}p\equiv4$ $\mathrm{mod}\ 8$ & \multicolumn{2}{c}{$(1,1,1,1,1,1,1,1)$}\tabularnewline
\hline 
$\mathrm{ind}_{g}^{(q)}p\equiv\pm2$ $\mathrm{mod}\ 8$ & \multicolumn{1}{c|}{$(1,1,0,0,1,1,0,0)$} & \multicolumn{1}{c}{$(\mu,\mu,\mu+1,\mu+1,\mu,\mu,\mu+1,\mu+1)$}\tabularnewline
\hline 
$\mathrm{ind}_{g}^{(q)}p\equiv\pm1$ $\mathrm{mod}\ 8$, $q\equiv1\ (\mathrm{mod}\ 16)$ & \multirow{2}{*}{$(1,0,0,0,1,0,0,0)$} & \multirow{2}{*}{$(\mu,1,\mu+1,1,\mu,1,\mu+1,1)$}\tabularnewline
\cline{1-1} 
$\mathrm{ind}_{g}^{(q)}p\equiv\pm3$ $\mathrm{mod}\ 8$, $q\equiv9\ (\mathrm{mod}\ 16)$ &  & \tabularnewline
\hline 
$\mathrm{ind}_{g}^{(q)}p\equiv\pm1$ $\mathrm{mod}\ 8$, $q\equiv9\ (\mathrm{mod}\ 16)$ & \multirow{2}{*}{$(1,1,1,0,1,1,1,0)$} & \multirow{2}{*}{$(\mu,0,\mu+1,0,\mu,0,\mu+1,0)$}\tabularnewline
\cline{1-1} 
$\mathrm{ind}_{g}^{(q)}p\equiv\pm3$ $\mathrm{mod}\ 8$, $q\equiv1\ (\mathrm{mod}\ 16)$ &  & \tabularnewline
\hline 
\end{tabular}
\end{table}

Note that $|\{0\leqslant i\leqslant7:s_{i,8}=0\}|=\begin{cases}
4, & \text{if }\left(\frac{2}{p}\right)_{8}=1,\\
0, & \text{otherwise},
\end{cases}$ and $|\{0\leqslant j\leqslant7:s_{8,j}=0\}|=\begin{cases}
4, & \text{if }\left(\frac{2}{q}\right)_{8}=1,\\
0, & \text{otherwise}.
\end{cases}$
\begin{itemize}
\item For $0\leqslant i\leqslant7$,
\[
s_{i,8}=\begin{cases}
0\text{ or }1\text{ (with each occurring }4\text{ times)}, & \text{if }\left(\frac{2}{p}\right)_{8}=1,\\
\mu\text{ or }\mu+1\text{ (with each occurring }4\text{ times)}, & \text{if }\left(\frac{2}{p}\right)_{8}\neq1\text{ and }\left(\frac{2}{p}\right)_{4}=1,\\
\eta,\eta^{2},\eta+1\text{ or }\eta^{2}+1\text{ (with each occurring }4\text{ times)}, & \text{if }\left(\frac{2}{p}\right)_{4}\neq1.
\end{cases}
\]

\item For $0\leqslant j,j'\leqslant7$ with $j'\equiv j-\mathrm{ind}_{g}^{(q)}p$
$(\mathrm{mod}\ 8)$,
\[
s_{8,j}+s_{8,j'}=\begin{cases}
0\text{ or }1, & \text{if }\left(\frac{2}{q}\right)_{4}=1\text{ or }\left(\frac{p}{q}\right)_{4}=1,\\
\mu\text{ or }\mu+1, & \text{if }\left(\frac{2}{q}\right)_{4}\neq1,\left(\frac{p}{q}\right)_{4}\neq1\text{ and }\left(\frac{p}{q}\right)_{2}=1,\\
0,1,\mu\text{ or }\mu+1, & \text{if }\left(\frac{2}{q}\right)_{4}\neq1\text{ and }\left(\frac{p}{q}\right)_{2}\neq1,
\end{cases}
\]
Especially when $\left(\frac{2}{q}\right)_{4}\neq1$ and $\left(\frac{p}{q}\right)_{2}\neq1$,
\begin{align*}
2 & =|\{0\leqslant j\leqslant7:s_{8,j}+s_{8,j'}=0\text{ or }1\}|\\
 & =|\{0\leqslant j\leqslant7:s_{8,j}+s_{8,j'}=\mu\text{ or }\mu+1\}|.
\end{align*}

\end{itemize}
Therefore, we count pairs $(i,j)$ in $\{0,1,\dots,7\}^{2}$ satisfying
$0=s_{i,j}=s_{i,8}+s_{8,j}+s_{8,j'}$ as in Table \ref{tab:sij=00003D0}.
\begin{table}[h]
\centering{}\protect\caption{\label{tab:sij=00003D0}The value of $|\{0\leqslant i,j\leqslant7:s_{i,j}=0\}|/64$}
\begin{tabular}{c|c|c|c}
\hline 
\multirow{1}{*}{} & $\left(\frac{2}{p}\right)_{8}=1$ & $\left(\frac{2}{p}\right)_{4}=1\neq\left(\frac{2}{p}\right)_{8}$ & $\left(\frac{2}{p}\right)_{4}\neq1$\tabularnewline
\hline 
\hline 
$\left(\frac{2}{q}\right)_{4}=1$ or $\left(\frac{p}{q}\right)_{4}=1$ & $\frac{1}{2}$ & $0$ & \multirow{3}{*}{$0$}\tabularnewline
\cline{1-3} 
$\left(\frac{2}{q}\right)_{4}\neq1$ and $\left(\frac{p}{q}\right)_{2}=1\neq\left(\frac{p}{q}\right)_{4}$ & $0$ & $\frac{1}{2}$ & \tabularnewline
\cline{1-3} 
$\left(\frac{2}{q}\right)_{4}\neq1$ and $\left(\frac{p}{q}\right)_{2}\neq1$ & \multicolumn{2}{c|}{$\frac{1}{4}$} & \tabularnewline
\hline 
\end{tabular}
\end{table}

In fact, the above argument gives an accounting of the zeros in the
matrix $\mathbb{S}=(s_{i,j})_{i,j=0}^{8}$, and can be easily translated
into the statement of Theorem \ref{thm:main}. Moreover, the result
can be rewritten more precisely as the following Theorem \ref{thm:cases}.
\begin{thm}
\label{thm:cases} The linear complexity of the Ding-Helleseth generalized
cyclotomic sequence of length $pq$ and order $\mathrm{gcd}(p-1,q-1)=8$
for primes $p$ and $q$ is
\[
L(p,q)=\begin{cases}
pq-1, & \text{if }\left(\frac{2}{p}\right)_{4}\neq1,\left(\frac{2}{q}\right)_{8}\neq1;\\
pq-1, & \text{if }\left(\frac{2}{p}\right)_{4}=1\neq\left(\frac{2}{p}\right)_{8},\left(\frac{2}{q}\right)_{4}=1\neq\left(\frac{2}{q}\right)_{8};\\
pq-1, & \text{if }\left(\frac{2}{p}\right)_{4}=1\neq\left(\frac{2}{p}\right)_{8},\left(\frac{2}{q}\right)_{4}\neq1,\left(\frac{p}{q}\right)_{4}=1;\\
pq-1-\frac{p-1}{2}, & \text{if }\left(\frac{2}{p}\right)_{8}=1,\left(\frac{2}{q}\right)_{4}\neq1,\left(\frac{p}{q}\right)_{2}=1\neq\left(\frac{p}{q}\right)_{4};\\
pq-1-\frac{q-1}{2}, & \text{if }\left(\frac{2}{p}\right)_{4}\neq1,\left(\frac{2}{q}\right)_{8}=1;\\
pq-1-\frac{q-1}{2}, & \text{if }\left(\frac{2}{p}\right)_{4}=1\neq\left(\frac{2}{p}\right)_{8},\left(\frac{2}{q}\right)_{8}=1;\\
pq-1-\frac{(p-1)(q-1)}{2}, & \text{if }\left(\frac{2}{p}\right)_{4}=1\neq\left(\frac{2}{p}\right)_{8},\left(\frac{2}{q}\right)_{4}\neq1,\left(\frac{p}{q}\right)_{2}=1\neq\left(\frac{p}{q}\right)_{4};\\
pq-1-\frac{(p-1)(q-1)}{2}-\frac{p-1}{2}, & \text{if }\left(\frac{2}{p}\right)_{8}=1,\left(\frac{2}{q}\right)_{4}=1\neq\left(\frac{2}{q}\right)_{8};\\
pq-1-\frac{(p-1)(q-1)}{2}-\frac{p-1}{2}, & \text{if }\left(\frac{2}{p}\right)_{8}=1,\left(\frac{2}{q}\right)_{4}\neq1,\left(\frac{p}{q}\right)_{4}=1;\\
pq-1-\frac{(p-1)(q-1)}{2}-\frac{p-1}{2}-\frac{q-1}{2}, & \text{if }\left(\frac{2}{p}\right)_{8}=1,\left(\frac{2}{q}\right)_{8}=1;\\
pq-1-\frac{(p-1)(q-1)}{4}, & \text{if }\left(\frac{2}{p}\right)_{4}=1\neq\left(\frac{2}{p}\right)_{8},\left(\frac{2}{q}\right)_{4}\neq1,\left(\frac{p}{q}\right)_{2}\neq1;\\
pq-1-\frac{(p-1)(q-1)}{4}-\frac{p-1}{2}, & \text{if }\left(\frac{2}{p}\right)_{8}=1,\left(\frac{2}{q}\right)_{4}\neq1,\left(\frac{p}{q}\right)_{2}\neq1.
\end{cases}
\]
\end{thm}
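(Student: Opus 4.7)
The plan is to apply Lemma \ref{lem:method}, which expresses $L(p,q)$ as $pq-1$ minus weighted contributions from the zero entries of the matrix $\mathbb{S}=(s_{i,j})_{0\leqslant i,j\leqslant 8}$. Since $p\equiv q\equiv 1\ (\mathrm{mod}\ 4)$ forces $s_{8,8}=0$ (contributing the uniform $-1$), the three remaining quantities to determine are $|\{0\leqslant i\leqslant 7:s_{i,8}=0\}|$, $|\{0\leqslant j\leqslant 7:s_{8,j}=0\}|$, and $|\{0\leqslant i,j\leqslant 7:s_{i,j}=0\}|$.

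First I would simply read off $\overrightarrow{A_{8}}(\beta)=(s_{i,8})_{i=0}^{7}$ from the four-case analysis of Section \ref{sec:A8}, and obtain $\overrightarrow{B_{8}}(\gamma)=(s_{8,j})_{j=0}^{7}$ (Table \ref{tab:B8}) by the evident symmetry under $p\leftrightarrow q$ and $\beta\leftrightarrow\gamma$. From these expressions one sees at a glance that the number of zero entries among $(s_{i,8})$ equals $4$ if $\left(\frac{2}{p}\right)_{8}=1$ and $0$ otherwise, and analogously for $(s_{8,j})$ with $\left(\frac{2}{q}\right)_{8}$. So the last-row and last-column contributions to $L(p,q)$ are immediate.

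The substantive step is the interior count, which I would handle using the identity $s_{i,j}=s_{i,8}+s_{8,j}+s_{8,j'}$ combined with the explicit list of $(s_{8,j}+s_{8,j'})_{j=0}^{7}$ in Table \ref{tab:BB}. The crucial structural observation is that in each case $s_{i,8}$ takes each value in its $2$- or $4$-element value set exactly the same number of times as $i$ ranges over $\{0,\ldots,7\}$, and similarly for $s_{8,j}+s_{8,j'}$. For fixed $j$, the equation $s_{i,j}=0$ forces $s_{i,8}=s_{8,j}+s_{8,j'}$; the number of solutions in $i$ is therefore $4$ if this forced value lies in the support of $s_{i,8}$, and $0$ otherwise. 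Summing over $j$ reduces the interior count to the table entries $|\{s_{i,j}=0\}|/64\in\{0,\tfrac14,\tfrac12\}$ displayed in Table \ref{tab:sij=00003D0}.

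The main obstacle is really the bookkeeping: carefully verifying, across the twelve combinations of conditions on $\left(\frac{2}{p}\right)_{4,8}$, $\left(\frac{2}{q}\right)_{4,8}$ and $\left(\frac{p}{q}\right)_{2,4}$, that the value sets of $(s_{i,8})$ and of $(s_{8,j}+s_{8,j'})$ are compatible in the way claimed, and that the shift by $-\mathrm{ind}_{g}^{(q)}p$ produces precisely the rows of Table \ref{tab:BB}. Once the three counts are tabulated, substituting into the formula of Lemma \ref{lem:method}, namely
\[
L(p,q)=pq-1-\tfrac{(p-1)(q-1)}{64}\cdot 64\cdot\tfrac{|\{s_{i,j}=0\}|}{64}-\tfrac{p-1}{8}\cdot 4\,\delta_{(2/p)_{8}=1}-\tfrac{q-1}{8}\cdot 4\,\delta_{(2/q)_{8}=1},
\]
mechanically yields the twelve cases of Theorem \ref{thm:cases}, and then collapsing cases gives the more compact form of Theorem \ref{thm:main}.
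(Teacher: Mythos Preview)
Your proposal is correct and follows essentially the same route as the paper: apply Lemma~\ref{lem:method} with $s_{8,8}=0$, read off $\overrightarrow{A_8}(\beta)$ from Section~\ref{sec:A8} and $\overrightarrow{B_8}(\gamma)$ by symmetry (Table~\ref{tab:B8}), obtain the last-row/column zero counts, and then use the identity $s_{i,j}=s_{i,8}+s_{8,j}+s_{8,j'}$ together with Table~\ref{tab:BB} and the equidistribution of values of $s_{i,8}$ to fill Table~\ref{tab:sij=00003D0}. The paper then simply substitutes these three counts into the same formula and rewrites the result as the twelve cases of Theorem~\ref{thm:cases}, exactly as you outline.
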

\begin{rem*}
The twelve cases in Theorem \ref{thm:cases} all happen for some primes
$p$ and $q$ with $\mathrm{gcd}(p-1,q-1)=8$. Indeed, if we define
a function
\[
\mathrm{Res}(c,p)=\begin{cases}
0, & \text{if }c\equiv0\ (\mathrm{mod}\ p),\\
1, & \text{if }\left(\frac{c}{p}\right)_{2}\neq1,\\
2, & \text{if }\left(\frac{c}{p}\right)_{2}=1\neq\left(\frac{c}{p}\right)_{4},\\
4, & \text{if }\left(\frac{c}{p}\right)_{4}=1\neq\left(\frac{c}{p}\right)_{8},\\
8, & \text{if }\left(\frac{c}{p}\right)_{8}=1,
\end{cases}
\]
for $c\in\mathbb{Z}$ and a prime $p\equiv1$ $(\mathrm{mod}\ 8)$,
then the three values of $\mathrm{Res}(2,p)\in\{2,4,8\}$, $\mathrm{Res}(2,q)\in\{2,4,8\}$
and $\mathrm{Res}(p,q)\in\{1,2,4,8\}$ are independent, with examples
in Table \ref{tab:Res}. Note here that $\left(\frac{2}{p}\right)_{2}=\left(\frac{2}{q}\right)_{2}=1$
as $p\equiv q\equiv1$ $(\mathrm{mod}\ 8)$.
\begin{table}[h]
\centering{}\protect\caption{\label{tab:Res}Examples of $(p,q)$ for all $(\mathrm{Res}(2,p),\mathrm{Res}(2,q),\mathrm{Res}(p,q))$}
\begin{tabular}{c|c|c|c|c|c}
\hline 
$\mathrm{Res}(2,p)$ & $\mathrm{Res}(2,q)$ & $\mathrm{Res}(p,q)=1$ & $\mathrm{Res}(p,q)=2$ & $\mathrm{Res}(p,q)=4$ & $\mathrm{Res}(p,q)=8$\tabularnewline
\hline 
\hline 
\multirow{3}{*}{$2$} & $2$ & $(17,41)$ & $(17,137)$ & $(17,457)$ & $(17,409)$\tabularnewline
\cline{2-6} 
 & $4$ & $(17,617)$ & $(17,281)$ & $(17,1481)$ & $(41,2273)$\tabularnewline
\cline{2-6} 
 & $8$ & $(17,73)$ & $(17,89)$ & $(41,73)$ & $(17,1721)$\tabularnewline
\hline 
\multirow{3}{*}{$4$} & $2$ & $(113,137)$ & $(113,521)$ & $(113,41)$ & $(113,313)$\tabularnewline
\cline{2-6} 
 & $4$ & $(113,1097)$ & $(113,1049)$ & $(113,2473)$ & $(113,1033)$\tabularnewline
\cline{2-6} 
 & $8$ & $(113,73)$ & $(113,233)$ & $(113,1801)$ & $(113,1721)$\tabularnewline
\hline 
\multirow{3}{*}{$8$} & $2$ & $(73,17)$ & $(73,41)$ & $(73,809)$ & $(73,137)$\tabularnewline
\cline{2-6} 
 & $4$ & $(73,113)$ & $(73,593)$ & $(73,353)$ & $(73,1889)$\tabularnewline
\cline{2-6} 
 & $8$ & $(73,233)$ & $(73,1217)$ & $(73,89)$ & $(73,2969)$\tabularnewline
\hline 
\end{tabular}
\end{table}
 
\end{rem*}

\section{Verifying examples by SageMath}

This section illustrates the validity of Theorem \ref{thm:main} by
testing examples, with the help of SageMath \cite{sagemath}, a free
open-source mathematics software system based on Python and many open-source
packages. Let us write down SageMath codes to compute two things:
\begin{itemize}
\item The linear complexity as $LC(p,q,g)=pq-\mathrm{deg}(\mathrm{gcd}(x^{pq}-1,\sum_{i=0}^{pq-1}s_{i}x^{i}))$,
for the Ding-Helleseth generalized cyclotomic sequence DH-GCS$_{d}=(s_{i})_{i=0}^{\infty}$
with respect to odd primes $p,q$ and a common primitive root $g$
of theirs, where $d=\mathrm{gcd}(p-1,q-1)$.
\end{itemize}
\begin{lstlisting}
def LC(p,q,g):
    f = mod(g,p).crt(mod(1,q)); K.<x> = GF(2)[]
    d = gcd(p-1,q-1); e = (p-1)*(q-1)/d; S = 0 
    for i in range(d/2,d):
        for t in range(e/d):
            gidt = g^(i+d*t)
            for v in range(d): S = S + x^((gidt * f^v)%(p*q))
        for t in range((p-1)/d): S = S + x^(q * (g^(i+d*t)%p))
        for t in range((q-1)/d): S = S + x^(p * (g^(i+d*t)%q))
    return p*q - gcd(x^(p*q)-1,S).degree()
\end{lstlisting}

\begin{itemize}
\item The result $L(p,q)$ in Theorem \ref{thm:main} for odd primes $p$
and $q$ with $8=d=\mathrm{gcd}(p-1,q-1)$.
\end{itemize}
\begin{lstlisting}
def L(p,q):
    rp = mod(2,p).log(); rq = mod(2,q).log(); rpq = mod(p,q).log();
    L = p*q - 1
    if rq%8==0: L = L - (q-1)/2
    if rp%8==0:
        L = L - (p-1)/2
        if rq%4==0 or rpq%4==0: L = L - (p-1)*(q-1)/2
    if rp%8==4 and rq%4!=0 and rpq%4==2: L = L - (p-1)*(q-1)/2
    if rp%4==0 and rq%4!=0 and rpq%2==1: L = L - (p-1)*(q-1)/4
    return L
\end{lstlisting}

The following SageMath codes verify Theorem \ref{thm:main} for odd
primes $p,q\leqslant D$ with a common primitive root $g$ given randomly.

\begin{lstlisting}
D = 500; import random
for p in prime_range(D):
    for q in prime_range(p,D):
        if gcd(p-1,q-1) != 8: continue
        Lpq = L(p,q); Lqp = L(q,p)
        R = random.randint(0,p*q)
        for g in range(R,R+p*q):
            mp = mod(g,p).is_primitive_root()
            mq = mod(g,q).is_primitive_root()
            if mp == mq == True: break
        if LC(p,q,g) != Lpq:
            print ("Wrong for p = %s, q = %s, g = %s" % (p,q,g))
        if LC(q,p,g) != Lqp:
            print ("Wrong for p = %s, q = %s, g = %s" % (q,p,g))
        else:
            print ("p=%3s, q=%3s, g=%6s, L(p,q)=%6s, L(q,p)=%6s" %
                   (p,q,g,Lpq,Lqp))
\end{lstlisting}

Without installation of SageMath, one can simply run the above codes
online in \href{http://sagecell.sagemath.org}{SageMathCell} or \href{https://cocalc.com}{CoCalc}.
There is a shared web page of the codes and the output of a run on
CoCalc:

\href{https://cocalc.com/share/6ac1f09c-5057-4be6-bd80-8bd567792ee9/Pub.sagews?viewer=share}{https://cocalc.com/share/6ac1f09c-5057-4be6-bd80-8bd567792ee9/Pub.sagews?viewer=share}

\section{Conclusion}

This paper precisely determines the linear complexity of the Ding-Helleseth
generalized cyclotomic sequence DH-GCS$_{8}$ of length $pq$, explicit
in terms of $p$ and $q$, as in Theorem \ref{thm:main} or Theorem
\ref{thm:cases}. Especially we obtain the following facts.
\begin{itemize}
\item The linear complexity is independent of the choice of a common primitive
root $g$ of $p$ and $q$.
\item The linear complexity has an expression
\[
L(p,q)=pq-1-\varepsilon(p-1)-\kappa(q-1)-\eta(p-1)(q-1)
\]
with $\varepsilon,\kappa\in\{\frac{1}{2},0\}$ and $\eta\in\{\frac{1}{2},\frac{1}{4},0\}$
depending only on the values of $\mathrm{Res}(2,p),\mathrm{Res}(2,q)\in\{2,4,8\}$
and $\mathrm{Res}(p,q)\in\{1,2,4,8\}$.
\item The linear complexity is no less than $pq-1-\frac{1}{2}(p-1+q-1+(p-1)(q-1))=\frac{pq-1}{2}$,
which confirms the low bound in Yan \cite{Yan2011linear}. This is
a high linear complexity to resist security attacks of the Berlekamp--Massey
algorithm.
\end{itemize}

We can also read off the minimal polynomial $m(x)$ of DH-GCS$_{8}$
from our calculation. For given primes $p$ and $q$ with $\mathrm{gcd}(p-1,q-1)=8$,
by a good choice of a primitive $pq$-th root $\alpha$ of unity in
an extension field of $\mathrm{GF}(2)$, the matrix $\mathbb{S}=(s_{i,j})_{i,j=0}^{8}$
can be precisely determined by the expressions of $\overrightarrow{A_{8}}(\beta)=(s_{i,8})_{i=0}^{7}$
in Section \ref{sec:A8}, $\overrightarrow{B_{8}}(\gamma)=(s_{8,j})_{j=0}^{7}$
in Table \ref{tab:B8}, and $\overrightarrow{B_{8}}(\gamma)+\sigma_{-\mathrm{ind}_{g}^{(q)}p}(\overrightarrow{B_{8}}(\gamma))=(s_{8,j}+s_{8,j'})_{j=0}^{7}$
in Table \ref{tab:BB}. Then 
\[
m(x)=(x^{pq}-1)/((x-1)\prod_{\substack{0\leqslant i,j\leqslant7,\\
s_{i,j}=0
}
}\mathbf{D}_{i,j}(x)\prod_{\substack{0\leqslant j\leqslant7,\\
s_{8,j}=0
}
}\mathbf{P}_{j-\mathrm{ind}_{g}^{(q)}p}(x)\prod_{\substack{0\leqslant i\leqslant7,\\
s_{i,8}=0
}
}\mathbf{Q}_{i-\mathrm{ind}_{g}^{(p)}q}(x)),
\]
where $\mathbf{D}_{i,j}(x)=\prod_{k\in D_{i,j}}(x-\alpha^{k})$, $\mathbf{P}_{j}(x)=\prod_{k\in P_{j}}(x-\alpha^{k})$,
and $\mathbf{Q}_{i}(x)=\prod_{k\in Q_{i}}(x-\alpha^{k})$.

In the end, let us list the linear complexity $L(p,q)$ for all primes
$p,q\leqslant D=500$ with $\mathrm{gcd}(p-1,q-1)=8$, given by SageMath.
This list can extend to much larger $D$, but we choose $D=500$ just
because of the limitation of space to write here.
\begin{table}[H]
\protect\caption{$L(p,q)$ for primes $p,q\leqslant D=500$ with $\mathrm{gcd}(p-1,q-1)=8$}

\begin{center}
\begin{tabular}{cccc|cccc|cccc}
\hline 
$p$ & $q$ & $L(p,q)$ & $L(q,p)$ & $p$ & $q$ & $L(p,q)$ & $L(q,p)$ & $p$ & $q$ & $L(p,q)$ & $L(q,p)$\tabularnewline
\hline 
\hline 

17 & 41 & 696 & 696 & 
17 & 73 & 1204 & 916 & 
17 & 89 & 1468 & 764 \tabularnewline
17 & 137 & 2328 & 2328 & 
17 & 233 & 3844 & 2916 & 
17 & 281 & 4776 & 2536 \tabularnewline
17 & 313 & 5320 & 5320 & 
17 & 409 & 6952 & 6952 & 
17 & 457 & 7768 & 7768 \tabularnewline
41 & 73 & 2956 & 2956 & 
41 & 89 & 3604 & 2724 & 
41 & 97 & 3976 & 3976 \tabularnewline
41 & 113 & 4632 & 4632 & 
41 & 137 & 5616 & 5616 & 
41 & 193 & 7912 & 7912 \tabularnewline
41 & 233 & 9436 & 7116 & 
41 & 257 & 10408 & 7848 & 
41 & 313 & 12832 & 12832 \tabularnewline
41 & 337 & 13648 & 13648 & 
41 & 353 & 14472 & 14472 & 
41 & 409 & 16768 & 16768 \tabularnewline
41 & 433 & 17752 & 17752 & 
41 & 449 & 18408 & 18408 & 
41 & 457 & 18736 & 18736 \tabularnewline
73 & 89 & 3248 & 3248 & 
73 & 113 & 4180 & 8212 & 
73 & 137 & 5068 & 9964 \tabularnewline
73 & 233 & 8504 & 8504 & 
73 & 257 & 9380 & 9380 & 
73 & 281 & 10396 & 20476 \tabularnewline
73 & 353 & 13060 & 25732 & 
73 & 401 & 29236 & 29236 & 
73 & 449 & 24676 & 32740 \tabularnewline
89 & 97 & 8588 & 8588 & 
89 & 113 & 5084 & 10012 & 
89 & 137 & 9156 & 12148 \tabularnewline
89 & 193 & 12908 & 17132 & 
89 & 233 & 10368 & 10368 & 
89 & 241 & 16124 & 21404 \tabularnewline
89 & 257 & 11436 & 11436 & 
89 & 281 & 12644 & 24964 & 
89 & 313 & 20948 & 27812 \tabularnewline
89 & 337 & 14996 & 14996 & 
89 & 401 & 35644 & 35644 & 
89 & 409 & 18404 & 36356 \tabularnewline
89 & 433 & 28988 & 38492 & 
89 & 449 & 20204 & 39916 & 
89 & 457 & 30596 & 40628 \tabularnewline
97 & 137 & 13288 & 13288 & 
97 & 233 & 22484 & 16916 & 
97 & 281 & 27256 & 20536 \tabularnewline
113 & 137 & 11672 & 15480 & 
113 & 233 & 26212 & 13220 & 
113 & 313 & 35368 & 35368 \tabularnewline
113 & 409 & 34792 & 46216 & 
113 & 457 & 38872 & 51640 & 
137 & 193 & 26440 & 26440 \tabularnewline
137 & 233 & 31804 & 23916 & 
137 & 241 & 33016 & 33016 & 
137 & 257 & 35080 & 35080 \tabularnewline
137 & 281 & 38496 & 19456 & 
137 & 313 & 42880 & 42880 & 
137 & 337 & 46000 & 23152 \tabularnewline
137 & 353 & 48360 & 36392 & 
137 & 401 & 54936 & 54936 & 
137 & 433 & 59320 & 59320 \tabularnewline
137 & 449 & 61512 & 61512 & 
137 & 457 & 62608 & 62608 & 
193 & 233 & 44852 & 33716 \tabularnewline
193 & 281 & 54232 & 40792 & 
233 & 241 & 56036 & 56036 & 
233 & 257 & 29940 & 29940 \tabularnewline
233 & 281 & 32876 & 65356 & 
233 & 313 & 54716 & 72812 & 
233 & 337 & 39260 & 39260 \tabularnewline
233 & 353 & 41300 & 82132 & 
233 & 401 & 70116 & 93316 & 
233 & 409 & 71516 & 95180 \tabularnewline
233 & 433 & 50660 & 100772 & 
233 & 449 & 78516 & 104500 & 
233 & 457 & 106364 & 106364 \tabularnewline
257 & 281 & 36248 & 72088 & 
257 & 313 & 60344 & 80312 & 
257 & 409 & 78872 & 104984 \tabularnewline
257 & 457 & 58952 & 117320 & 
281 & 313 & 44272 & 87952 & 
281 & 353 & 99192 & 99192 \tabularnewline
281 & 409 & 57808 & 114928 & 
281 & 433 & 91432 & 121672 & 
281 & 457 & 96496 & 128416 \tabularnewline
313 & 353 & 110488 & 83032 & 
313 & 401 & 125512 & 125512 & 
313 & 449 & 140536 & 140536 \tabularnewline
353 & 409 & 108472 & 144376 & 
353 & 457 & 121192 & 161320 & 
401 & 409 & 164008 & 164008 \tabularnewline
401 & 457 & 183256 & 183256 & 
409 & 449 & 183640 & 183640 & 
449 & 457 & 205192 & 205192 \tabularnewline

\hline 
\end{tabular}
\end{center}
\end{table}

\bibliographystyle{model5-names}
\phantomsection\addcontentsline{toc}{section}{\refname}\bibliography{LC}

\end{document}